\DeclareMathOperator{\ann}{ann}
\DeclareMathOperator{\codim}{codim}
\DeclareMathOperator{\depth}{depth}
\DeclareMathOperator{\Ext}{Ext}
\DeclareMathOperator{\gdim}{G-dim}
\DeclareMathOperator{\Hom}{Hom}
\DeclareMathOperator{\injdim}{injdim}
\DeclareMathOperator{\projdim}{projdim}
\DeclareMathOperator{\rank}{rank}
\DeclareMathOperator{\Soc}{Soc}
\DeclareMathOperator{\Tor}{Tor}
\renewcommand{\ge}{\geqslant}
\renewcommand{\le}{\leqslant}
\theoremstyle{plain}
\newtheorem{theorem}{Theorem}[section]
\newtheorem{lemma}[theorem]{Lemma}
\newtheorem{corollary}[theorem]{Corollary}
\newenvironment{customthm}[1]
  {\innercustomthm}
  {\endinnercustomthm}
\theoremstyle{definition}
\newtheorem{definition}[theorem]{Definition}
\newtheorem{example}[theorem]{Example}
\newtheorem{question}[theorem]{Question}
\newtheorem{para}[theorem]{}
\theoremstyle{remark}
\newtheorem{remark}[theorem]{Remark}
\numberwithin{equation}{theorem}
\title[Criteria for regular and Gorenstein rings via syzygies]{Some criteria for regular and Gorenstein local rings via syzygy modules}
\author[Dipankar Ghosh]{Dipankar Ghosh}
\address{Chennai Mathematical Institute, H1, SIPCOT IT Park, Siruseri, Kelambakkam, Chennai 603103, Tamil Nadu, India}
\email{dghosh@cmi.ac.in}
\subjclass[2010]{Primary 13D02; Secondary 13D07, 13H05, 13H10} 
\keywords{Cohen-Macaulay; Gorenstein; Regular; Canonical module; Semidualizing module; Syzygy; Ext; Tor; Minimal multiplicity}
\begin{document}

\begin{abstract}
	Let $ R $ be a Cohen-Macaulay local ring. We prove that the $ n $th syzygy module of a maximal Cohen-Macaulay $ R $-module cannot have a semidualizing direct summand for every $ n \ge 1 $. In particular, it follows that $ R $ is Gorenstein if and only if some syzygy of a canonical module of $ R $ has a non-zero free direct summand. We also give a number of necessary and sufficient conditions for a Cohen-Macaulay local ring of minimal multiplicity to be regular or Gorenstein. These criteria are based on vanishing of certain Exts or Tors involving syzygy modules of the residue field.
\end{abstract}

\maketitle

\section{Introduction}\label{Introduction}
 
 Let $(R, \mathfrak{m}, k)$ be a commutative Noetherian local ring. Let $ M $ be a finitely generated $ R $-module. For $ n \ge 0 $, let $\Omega_n^R(M)$ be the $n$th syzygy module in a minimal free resolution of $ M $. We abbreviate Cohen-Macaulay (resp. maximal Cohen-Macaulay) to CM (resp. MCM). In the study of Hochster's Canonical Element Conjecture from the point of view of syzygies, Dutta showed the following properties of syzygies:
 
 \begin{theorem}{\rm \cite[Corollary~1.2]{Dut89}}\label{thm: Dutta free summand}
 	Let $ R $ be a $ d $-dimensional local ring. Assume that $ \depth(R) > 0 $. Let $ M $ be an $ R $-module of finite length. Then:
 	\begin{enumerate}[{\rm (i)}]
 		\item
 		If $ R $ is CM, then only $ \Omega_d^R(M) $ may have a non-zero free direct summand.
 		\item
 		If $ \depth(R) < d - 1 $, then no $ \Omega_n^R(M) $ can have a non-zero free direct summand.
 		\item
 		If $ \depth(R) = d - 1 $, then no $ \Omega_n^R(M) $ can have a non-zero free direct summand for $ n \neq d - 1 $, and $ \Omega_{d-1}^R(M) $ has no non-zero free direct summand if and only if the canonical element conjecture holds true in this case.
 	\end{enumerate}
 \end{theorem}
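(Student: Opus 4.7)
My strategy is to translate the existence of a free direct summand into an explicit criterion on an $\Ext$ map and then rule out that criterion via depth and local-cohomology inputs. Fix a minimal free resolution $F_\bullet \to M$ with syzygies $\Omega_n^R(M) \subset F_{n-1}$. A retraction $r \colon \Omega_n^R(M) \to R$ lifts to a cocycle $\alpha \colon F_n \to R$: minimality gives $\Omega_n^R(M) \subset \fm F_{n-1}$, so any coboundary $\beta \circ d_n$ with $\beta \in \Hom_R(F_{n-1}, R)$ has image contained in $\fm$ and cannot represent a surjection; yet $\alpha$ surjects onto $R$, so its image in $\Ext_R^n(M, k) \cong k^{\mu_n}$ is non-zero. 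Conversely, any cocycle with non-zero image in $\Ext_R^n(M, k)$ has a unit component and descends to a split surjection $\Omega_n^R(M) \to R$. This yields the working criterion
\[ \Omega_n^R(M) \text{ has a non-zero free summand} \Longleftrightarrow \Image\!\bigl(\Ext_R^n(M, R) \to \Ext_R^n(M, k)\bigr) \ne 0, \]
with the map induced by $R \twoheadrightarrow k$.

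Using this criterion, the range $n < \depth R$ is immediate: since $M$ has finite length, $\Ext_R^n(M, R) = 0$ for $n < \depth R$. This disposes of part~(i) for $n < d$, part~(ii) for $n < \depth R$, and part~(iii) for $n < d - 1$. For part~(i) with $n > d$, the plan is to use that $\Omega_n^R(M)$ is MCM for $n \geq d$ over a CM ring and that a free summand at step $n > d$ lets one peel off a trivial stretch of the minimal resolution, re-expressing the tail of the resolution as that of a smaller MCM module; a duality argument against the canonical module $\omega_R$ combined with the observation that the map $\Ext_R^n(M, R) \to \Ext_R^n(M, k)$ factors through the top local cohomology of the complementary summand should force the would-be free summand back to step $d$. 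For $n \geq \depth R$ in parts~(ii) and~(iii), iterate the long exact sequence of $H^\bullet_\fm(-)$ attached to $0 \to \Omega_{j+1}^R(M) \to F_j \to \Omega_j^R(M) \to 0$ and use the vanishing $H^i_\fm(R) = 0$ for $i < \depth R$: for $i \leq \depth R - 1$ this identifies $H^i_\fm(\Omega_n^R(M))$ as $M$ when $i = n$ and as $0$ otherwise. A hypothetical decomposition $\Omega_n^R(M) = R \oplus N$ forces $H^\bullet_\fm(R)$ to be a direct summand of $H^\bullet_\fm(\Omega_n^R(M))$ in every degree, and the non-vanishing of $H^{\depth R}_\fm(R)$, coupled with the pinned-down low-degree vanishing, produces a length and structure mismatch unless $\depth R = d - 1$ and $n = d - 1$.

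The principal obstacle is part~(iii)'s equivalence with the canonical element conjecture, which cannot be resolved on purely numerical grounds. The plan is to choose a system of parameters $\ux = x_1, \ldots, x_d$ of $R$, lift the surjection $R/(\ux) \twoheadrightarrow k$ to a comparison map $K_\bullet(\ux; R) \to G_\bullet$ into a minimal free resolution $G_\bullet \to k$ of the residue field, and match, under the criterion above applied to $M = k$, the non-vanishing of the image of $\Ext_R^{d-1}(k, R) \to \Ext_R^{d-1}(k, k)$ with the non-vanishing of Hochster's canonical element class. For general finite-length $M$, the reduction to $M = k$ proceeds along a composition series using naturality of the criterion in $M$, so the canonical element detects the free summand uniformly. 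Executing this final correspondence is the delicate non-formal step, and the one I expect to be the main technical challenge.
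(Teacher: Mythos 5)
First, a point of comparison: the paper offers no proof of this statement at all --- it is quoted verbatim as background from Dutta \cite[Corollary~1.2]{Dut89} --- so there is no in-paper argument to measure yours against, and I can only assess the proposal on its own terms.

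Your opening criterion, that $\Omega_n^R(M)$ has a non-zero free direct summand if and only if the natural map $\Ext_R^n(M,R)\to\Ext_R^n(M,k)$ is non-zero, is correct (it is exactly Dutta's Theorem~1.1), and your derivation of it from minimality of the resolution is sound. Likewise, $\Ext_R^n(M,R)=0$ for $n<\depth(R)$ (because $\grade(\ann_R(M),R)=\depth(R)$ for finite-length $M$) correctly eliminates that range. Everything past this point, however, is a plan rather than a proof, and the plans have real holes. For part~(i) with $n>d$ you say a duality argument ``should force'' the summand back to step $d$; nothing is executed. (A complete route here is essentially Theorem~\ref{thm: syz mcm no free image} of this paper: write $\Omega_n^R(M)=\Omega_{n-d}^R(\Omega_d^R(M))$ with $\Omega_d^R(M)$ MCM, reduce modulo a maximal regular sequence, and use $\Soc(\oR)\subseteq\ann_{\oR}(\Omega_j^{\oR}(-))$ for $j\ge 1$.) For parts~(ii) and (iii) with $n\ge\depth(R)$, your local cohomology induction only identifies $H^i_{\fm}(\Omega_n^R(M))$ for $i\le\depth(R)-1$, and in that range (once $n\ge\depth(R)$) it merely says these groups vanish; but $H^i_{\fm}(R)$ also vanishes for $i<\depth(R)$, so a hypothetical decomposition $\Omega_n^R(M)\cong R\oplus N$ produces no contradiction in any degree you control. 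The asserted ``length and structure mismatch'' lives in degree $\depth(R)$ and above, which your argument never reaches; this is exactly where Dutta needs a finer analysis of the image of $\Ext_R^n(M,R)\to\Ext_R^n(M,k)$. Finally, the equivalence in (iii) with the canonical element conjecture --- the substantive content of the theorem --- is explicitly deferred as ``the main technical challenge,'' so it is not proved. As it stands the proposal establishes only the easy range $n<\depth(R)$.
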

 
 In \cite{Gol84}, Golod introduced the notion of semidualizing module by the name of suitable module. An $R$-module $M$ is said to be {\it semidualizing} if the following hold:
 \begin{enumerate}[(i)]
 	\item The natural homomorphism $R \to \Hom_R(M,M)$ is an isomorphism.
 	\item $\Ext_R^i(M,M) = 0$ for all $i \ge 1$.
 \end{enumerate}
 For example, $R$ itself is a semidualizing $R$-module. If $R$ is a CM local ring with a canonical module $\omega$, then $\omega$ is a semidualizing $ R $-module; see, e.g., \cite[3.3.10]{BH98}. In this article, we examine whether syzygies of an arbitrary module over a CM local ring have semidualizing direct summands. Our result is motivated by the following theorem of Martsinkovsky \cite[Proposition~7]{Mar96}:
 
 \begin{theorem}[Martsinkovsky]\label{thm:Martsinkovsky}
 	If $ R $ is non-regular, then no direct sum of the syzygy modules of $k$ maps onto a non-zero $R$-module of finite projective dimension.
 \end{theorem}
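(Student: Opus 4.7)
The plan is to argue by contradiction. Suppose there are nonnegative integers $n_1,\dots,n_r$ and a surjection
\[
\phi \colon M := \bigoplus_{i=1}^r \Omega_{n_i}^R(k) \twoheadrightarrow N
\]
with $N \ne 0$ and $p := \pd_R N < \infty$; choose such a counterexample with $p$ minimal. The aim is to contradict the non-regularity of $R$, and the strategy is induction on $p$.

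For the inductive step $p \ge 1$, apply the Horseshoe lemma to $0 \to K \to M \xrightarrow{\phi} N \to 0$ and pass to first syzygies. Since the minimal first syzygy of $\bigoplus \Omega_{n_i}^R(k)$ is $\bigoplus \Omega_{n_i+1}^R(k)$, this produces (after absorbing a Schanuel free correction) a surjection
\[
\bigoplus_{i=1}^r \Omega_{n_i+1}^R(k) \oplus R^a \twoheadrightarrow \Omega_R(N),
\]
where $\Omega_R(N)$ is nonzero with $\pd_R \Omega_R(N) = p-1$. To contradict the minimality of $p$ cleanly despite the extra free summand, I would strengthen the inductive claim to: \emph{for $R$ non-regular, every map $\Omega_n^R(k) \to N$ with $\pd_R N < \infty$ has image contained in $\fm N$}. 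By Nakayama this implies the theorem at once, and it behaves well under taking syzygies.

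The base case $p=0$ of this strengthened claim reduces, via Krull--Schmidt over the local ring $R$, to showing that no $\Omega_n^R(k)$ has $R$ as a direct summand when $R$ is non-regular. For $n=0$, $R$ being a summand of $k$ forces $R=k$, hence $R$ is a field and regular. For $n\ge 1$, any retraction $\pi\colon \Omega_n^R(k)\twoheadrightarrow R$ cannot extend to $F_{n-1}\to R$: the inclusion $\Omega_n^R(k)\subseteq \fm F_{n-1}$ (by minimality) would then force $R=\pi(\Omega_n^R(k))\subseteq \fm R$, violating Nakayama. Thus $\pi$ represents a nonzero class in $\Ext_R^n(k,R)$.

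The main obstacle is turning this nonvanishing into a contradiction, since $\Ext_R^n(k,R)$ can be nonzero for non-regular $R$ in general (for instance, at $n=\dim R$ over any Cohen--Macaulay ring). The case $\depth R = 0$ is easy: if $x\ne 0$ annihilates $\fm$, then $x$ annihilates every element of $\fm F_{n-1}$, so no $e\in\Omega_n^R(k)\subseteq\fm F_{n-1}$ can generate a free submodule and no free summand exists. For $\depth R > 0$ I would invoke Dutta's Theorem~\ref{thm: Dutta free summand}, which restricts the possible indices to at most one value of $n$ (namely $n=\dim R$ in the Cohen--Macaulay case, and none when $\depth R < \dim R - 1$). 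The remaining case—$R$ Cohen--Macaulay non-regular with $n=\dim R$—would be dispatched by exploiting that the retraction produces a minimal generator $e\in\Omega_d^R(k)\cap\fm F_{d-1}$ with zero annihilator, whose interaction with the canonical module of $R$ should yield the final contradiction.
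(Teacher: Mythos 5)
This statement is not proved in the paper at all --- it is quoted from Martsinkovsky \cite[Proposition~7]{Mar96} --- so your argument has to stand on its own, and it does not: both ends of your induction have real gaps, and the inductive step is the fatal one. Your strengthened claim (``every map $\Omega_n^R(k)\to N$ with $\pd_R N<\infty$ has image in $\fm N$'') does imply the theorem by Nakayama, but the assertion that it ``behaves well under taking syzygies'' is exactly where the content of the theorem lives, and it is not automatic. Concretely, lift $f\colon\Omega_n^R(k)\to N$ to $\tilde f_0\colon F_n\to G_0$ between minimal free covers; the hypothesis $\Image(f)\not\subseteq\fm N$ says $\tilde f_0$ has a unit entry, and restricting $\tilde f_0$ gives $g\colon\Omega_{n+1}^R(k)\to\Omega_1^R(N)$. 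To apply the inductive hypothesis you must show $\Image(g)\not\subseteq\fm\,\Omega_1^R(N)$, which (by minimality of the two resolutions) is equivalent to the next comparison map $\tilde f_1\colon F_{n+1}\to G_1$ also having a unit entry. That does not follow: from $\tilde f_0\, d_{n+1}^{k}=d_1^{N}\tilde f_1$ all you can extract, if $\tilde f_1$ had entries in $\fm$, is that some row of $d_{n+1}^{k}$ (in a suitable basis) lies in $\fm^2$ --- and rows of the differentials of the minimal resolution of $k$ \emph{can} lie in $\fm^2$ (over $k[x]/(x^3)$ the second differential is the $1\times1$ matrix $(x^2)$). Put differently, $g$ is the restriction of $\tilde f_0$ to $\ker(F_n\to\Omega_n^R(k))$, which is strictly smaller than $\tilde f_0^{-1}(\Omega_1^R(N))$ whenever $\ker f\neq0$, so the ``unit entry'' information is lost in passing to syzygies. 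Some essential use of $\pd_R N<\infty$ is needed at this point and none is supplied; this is also why your first, unstrengthened horseshoe argument stalls, since $\bigoplus\Omega_{n_i+1}^R(k)\oplus R^a\twoheadrightarrow\Omega_1^R(N)$ carries no information when $a>0$ (a free module always surjects onto $\Omega_1^R(N)$).

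The base case $p=0$ is the full statement of Dutta's Theorem~\ref{thm: Dutta RLR} (no $\Omega_n^R(k)$ has a nonzero free direct summand when $R$ is non-regular), and your route to it is incomplete. Theorem~\ref{thm: Dutta free summand} only narrows the candidate indices; you explicitly leave the Cohen--Macaulay case $n=\dim R$ unproved (``should yield the final contradiction''), and you silently drop the case $\depth R=\dim R-1$, where Theorem~\ref{thm: Dutta free summand}(iii) ties the answer to the Canonical Element Conjecture rather than settling it. (Your $\depth R=0$ socle argument is correct and is exactly Lemma~\ref{lem: soc and ann of syz} in action.) If you are allowed to cite Theorem~\ref{thm: Dutta RLR}, the base case is fine; if you want it self-contained, the efficient route is the reduction this paper uses elsewhere: choose a maximal $R$-regular sequence $\ux$ with each element outside the square of the maximal ideal, so that iterating Lemma~\ref{lemma: Nagata} decomposes $\overline{\Omega_n^R(k)}$ into a direct sum of modules $\Omega_j^{\oR}(k)$; a split surjection $\Omega_n^R(k)\to R$ then induces, since $\oR$ is local, a split surjection $\Omega_j^{\oR}(k)\to\oR$ for some $j$. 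For $j\ge1$ this contradicts Lemma~\ref{lem: soc and ann of syz} because $\Soc(\oR)\neq0$ while $\ann_{\oR}(\oR)=0$, and for $j=0$ it forces $\oR\cong k$, i.e.\ $\fm=(\ux)$ and $R$ regular. Even with the base case repaired this way, the inductive step above remains unproved, so the proposal does not establish the theorem.
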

 
 In \cite[Corollary~9]{Avr96}, Avramov generalized this result by showing that each non-zero homomorphic image of a finite direct sum of syzygy modules of $ k $ has maximal projective complexity and curvature. In this direction, we prove the following:
 
 \begin{customthm}{\ref{thm: syz mcm no free image}}\label{thm: I}
 	Let $ R $ be a CM local ring, and $ M $ be an MCM $ R $-module. Let $ L $ be a non-zero homomorphic image of a finite direct sum of $ \Omega_n^R(M) $, $ n \ge 1 $. Then $ L $ cannot be a semidualizing $ R $-module. In particular, $ L $ cannot be free, or $ L $ cannot be an MCM $ R $-module of finite injective dimension.
 \end{customthm}
 
 Another motivation for Theorem~\ref{thm: I} is to obtain a characterization of Gorenstein local rings via free summands of certain syzygy modules. In \cite[Corollary~1.3]{Dut89}, Dutta gave the following characterization of regular local rings.
 
 \begin{theorem}[Dutta]\label{thm: Dutta RLR}
 	The local ring $R$ is regular if and only if $\Omega_n^R(k)$ has a non-zero free direct summand for some $n \ge 0$.
 \end{theorem}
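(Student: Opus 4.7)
My plan is to deduce the stated characterization directly from Martsinkovsky's Theorem~\ref{thm:Martsinkovsky}, which already supplies nearly all the substantive content, together with the Auslander--Buchsbaum--Serre characterization of regularity for the forward implication.

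For the forward direction, assume that $R$ is regular of dimension $d$. Then $\pd_R(k) = d$, so the minimal free resolution of $k$ has the shape $0 \to F_d \to F_{d-1} \to \cdots \to F_0 \to k \to 0$ with $F_d \neq 0$. The final differential $F_d \to F_{d-1}$ must be injective by exactness, so $\Omega_d^R(k) \cong F_d$ is a non-zero free $R$-module; in particular, taking $n = d$ it is itself a non-zero free direct summand of $\Omega_n^R(k)$.

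For the backward direction, I would proceed by cases on $n$. If $n = 0$, then $k$ itself has a non-zero free summand, forcing $\ell(R) \le \ell(k) = 1$, so $R = k$ is a field and in particular regular. If $n \ge 1$, projection onto the postulated free summand produces a surjection $\Omega_n^R(k) \twoheadrightarrow R$. Since $R$ is a non-zero $R$-module of projective dimension zero, this realizes $R$ as a non-zero homomorphic image of a single syzygy of $k$ (a degenerate case of a finite direct sum of such syzygies). The contrapositive of Theorem~\ref{thm:Martsinkovsky} then immediately forces $R$ to be regular.

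I do not anticipate any real obstacle in executing this plan, since Martsinkovsky's theorem does all the heavy lifting. The only delicate point is the boundary case $n = 0$, which arguably lies outside the conventional reading of ``syzygy modules of $k$'' in the statement of Theorem~\ref{thm:Martsinkovsky}; it is handled by an immediate length argument as above. Every other step is a routine invocation of a classical result or a direct unpacking of the hypothesis.
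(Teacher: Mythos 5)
Your argument is correct: the forward direction is the standard computation from the Koszul resolution of $k$ over a regular local ring, and the backward direction is precisely the observation the paper itself makes in the introduction, namely that Dutta's theorem is a special case of Martsinkovsky's Theorem~\ref{thm:Martsinkovsky} (the paper only cites the result from \cite{Dut89} and does not reprove it). Your separate length argument for the boundary case $n=0$ is a sensible precaution and is also correct.
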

 
 Inspired by this theorem, in \cite[Theorem~2.4]{Sna10}, Snapp gave a new characterization of CM local rings by proving that $ R $ is CM if and only if for some $ n \ge 0 $, $ \Omega_n^R(R/(\underline{x})) $ has a non-zero free direct summand for some system of parameters $ \underline{x} $ that form part of a minimal set of generators for $ \mathfrak{m} $. Let $ \omega $ be a canonical module of $ R $. It is well known that $ R $ is Gorenstein if and only if $ \projdim_R(\omega) $ is finite, which is equivalent to saying that some syzygy module of $\omega$ is free. Therefore, in view of the above characterizations, one may pose the following question: If some syzygy module of $ \omega $ has a non-zero free direct summand, then what can be said about the ring? As a consequence of Theorem~\ref{thm: I}, we show that $ R $ is Gorenstein if and only if $ \Omega_n^R(\omega) $ has a non-zero free direct summand for some $ n \ge 0 $; see Corollary~\ref{cor: criteria Gor syz of can mod}.
 
 Let $ (R,\mathfrak{m},k) $ be a $ d $-dimensional CM local ring. The {\it multiplicity} of $ R $, i.e., the normalized leading coefficient of the Hilbert-Samuel polynomial $ P(n) $ ($ = $ length of $ R/\mathfrak{m}^{n+1} $ for all $ n \gg 0 $) is denoted by $ e(R) $. In \cite[(1)]{Abh67}, Abhyankar showed that
 \begin{center}
 	$e(R) \ge \mu(\mathfrak{m}) - d + 1,$
 \end{center}
 where $ \mu(M) $ denotes the minimal number of generators of an $ R $-module $ M $. If equality holds, then $ R $ is said to have {\it minimal multiplicity}, or {\it maximal embedding dimension}. It is well known that if $ k $ is infinite, then $ R $ has minimal multiplicity if and only if there exists an $ R $-regular sequence $ \underline{x} $ such that $ \mathfrak{m}^2 = (\underline{x})\mathfrak{m} $; see, e.g., \cite[4.5.14(c)]{BH98}. Investigations of these rings were started by Sally in \cite{Sal77} and \cite[Theorem~1]{Sal79}, where properties of the associated graded rings, Hilbert functions and Poincar\'{e} series were studied.
 Note that every regular local ring is CM of minimal multiplicity. But the converse is not true in general, e.g., $ R_1 = k[X,Y]/(X^2, XY, Y^2) $ and $ R_2 = k[[X,Y]]/(XY) $, where $ X $ and $ Y $ are indeterminates, and $ k $ is a field. Note that $ R_1 $ is not even Gorenstein. In this article, we provide a number of necessary and sufficient conditions for a CM local ring of minimal multiplicity to be regular or Gorenstein in terms of vanishing of certain Exts or Tors involving syzygy modules of the residue field.
 
 It can be noticed that Theorem~\ref{thm: Dutta RLR} of Dutta is a special case of Theorem~\ref{thm:Martsinkovsky} of Martsinkovsky. In \cite[Theorem~4.3]{Tak06}, Takahashi generalized Dutta's result in another direction. He showed that $R$ is regular if and only if $\Omega_n^R(k)$ has a semidualizing direct summand for some $n \ge 0$. Motivated by these results, in \cite[Corollaries~3.2 and 3.4]{GGP}, the author along with Gupta and Puthenpurakal proved that if a finite direct sum of syzygy modules of $k$ maps onto a semidualizing $ R $-module or an MCM (which is assumed to  be non-zero) $ R $-module of finite injective dimension, then $R$ is regular. In this direction, we prove the following:
 
 \begin{customthm}{\ref{thm: criteria for RLR}}\label{thm: II}
 	Let $ (R,\mathfrak{m},k) $ be a $ d $-dimensional CM local ring of minimal multiplicity. Let $ M $ and $ N $ be MCM homomorphic images of finite direct sums of syzygy modules of $ k $. {\rm(}Possibly, $ M = N ${\rm)}. Then the following statements are equivalent:
 	\begin{enumerate}[{\rm(i)}]
 		\item
 		$ R $ is regular;
 		\item
 		$ \Ext_R^i(M,N) = 0 $ for some $ (d + 1) $ consecutive values of $ i \ge 1 $;
 		\item
 		$ \Tor_i^R(M,N) = 0 $ for some $ (d + 1) $ consecutive values of $ i \ge 1 $.
 	\end{enumerate}
 \end{customthm}

 The motivation for considering consecutive vanishing of Exts as well as Tors in Theorem~\ref{thm: II} came from the remarkable result due to Avramov and Buchweitz \cite[Theorem~III]{AB00}: If $ R $ is a local complete intersection ring, then for $ R $-modules $ M $ and $N$, the following conditions are equivalent:
 \begin{enumerate}[(i)]
 	\item $ \Ext_R^i(M, N) = 0 $ for some $ (\codim(R) + 1) $ consecutive values of $ i > \dim(R) $;
 	\item $ \Tor_i^R(M,N) = 0 $ for some $ (\codim(R) + 1) $ consecutive values of $ i > \dim(R) $;
 	\item $ \Ext_R^i(M, N) = 0 = \Tor_i^R(M, N) $ for all $ i > \dim(R) $.
 \end{enumerate}
 
 Let $ (R,\mathfrak{m},k) $ be a $ d $-dimensional CM local ring. Ulrich \cite[Theorem~3.1]{Ulr84} gave a criterion for a CM local ring to be Gorenstein: If there exists an $ R $-module $ L $ with positive rank such that $ 2 \mu(L) > e(R) \rank(L) $ and $ \Ext_R^i(L,R) = 0 $ for all $ 1 \le i \le d $, then $ R $ is Gorenstein. In \cite[Theorems~2.5 and 3.4]{HH05}, Hanes and Huneke showed a few variations of this result. Recently, Jorgensen and Leuschke gave some analogous criteria in \cite[Theorems~2.2 and 2.4]{JL07}.
 
 \begin{theorem}\cite[2.2]{JL07}\label{thm: Jorgensen and Leuschke}
 	If $ R $ has a canonical module $ \omega $, and $ L $ is a CM $ R $-module such that for some $ (R \oplus L) $-regular sequence $ \underline{x} $ of length $ \dim(L) $,
 	\begin{enumerate}[{\rm (i)}]
 		\item $ \lambda_R(\mathfrak{m} L/\underline{x}L) < \mu(L) $ {\rm (}where $ \lambda_R(-) $ is the length function{\rm )} \;and
 		\item $ \Ext_R^i(L, R) = 0  $\quad for \;$ 1 + \dim(R) - \dim(L) \le i \le \dim(R) + \mu(\omega) $,
 	\end{enumerate}
 	then $ R $ is Gorenstein.
 \end{theorem}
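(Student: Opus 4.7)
The aim is to show $R$ is Gorenstein, which is equivalent to $\mu(\omega) = 1$. Set $d = \dim R$ and $r = \mu(\omega)$. My plan proceeds in three stages, followed by an identification of the main obstacle.

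The first step is to reduce to the case where $L$ has finite length by passing modulo the $L$- and $R$-regular sequence $\ux$. Write $\oR = R/(\ux)$, $\oL = L/\ux L$, and $\oomega = \omega/\ux\omega$. Maximal Cohen-Macaulayness of $\omega$ forces $\ux$ to be $\omega$-regular, so $\oomega$ is a canonical module of $\oR$ with $\mu(\oomega) = r$, and hypothesis (i) becomes $\lambda(\fm\oL) < \mu(\oL)$. The $L$-regularity of $\ux$ gives the change-of-rings isomorphism $\Ext_R^i(L, \oR) \cong \Ext_{\oR}^i(\oL, \oR)$. Peeling off one $x_j$ at a time through the Ext long exact sequences attached to $0 \to R/(x_1,\dots,x_{j-1}) \xrightarrow{x_j} R/(x_1,\dots,x_{j-1}) \to R/(x_1,\dots,x_j) \to 0$ iteratively translates hypothesis (ii) into $\Ext_{\oR}^i(\oL, \oR) = 0$ for $\dim(\oR) + 1 \le i \le \dim(\oR) + r$. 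Renaming, I may assume $L$ has finite length, $\lambda(\fm L) < \mu(L)$, and $\Ext_R^i(L, R) = 0$ for $d + 1 \le i \le d + r$.

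The second step uses local duality to pin down Ext against $\omega$. For $L$ of finite length, $\Ext_R^i(L, \omega) = 0$ for $i \ne d$ and $\Ext_R^d(L, \omega) \cong L^\vee$, of length $\lambda(L)$. Take a minimal free resolution $F_\bullet \to \omega$ with $F_0 = R^r$ and syzygies $K_j = \Omega_j^R(\omega)$. Applying $\Hom_R(L, -)$ to the short exact sequences $0 \to K_{j+1} \to F_j \to K_j \to 0$ and invoking the vanishing of $\Ext_R^i(L, R)$ for $i \in [d+1, d+r]$ produces, by dimension-shifting, a chain of subquotient relations that propagates $L^\vee$ through the syzygies of $\omega$: for $1 \le j \le r$, the module $\Ext_R^{d+j}(L, K_j)$ is a subquotient of $L^\vee$ controlled by the Betti numbers $\beta_0, \dots, \beta_{j-1}$ of $\omega$.

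The third step is the numerical punchline. Assume $r \ge 2$. Since $R$ is non-Gorenstein, $\omega$ has infinite projective dimension, so every $\beta_j(\omega) = \mu(K_j)$ is positive. Combining this with the chain of subquotients from the second step and with the inequality $\lambda(\fm L) < \mu(L)$, i.e.\ $\lambda(L) < 2\mu(L)$, should force a contradiction via a length count that tracks how many copies of $\mu(L)$ accumulate at each dimension shift. Hence $r = 1$; since a cyclic canonical module over a CM local ring is faithful, $\omega \cong R$ and $R$ is Gorenstein. The main obstacle I foresee is this last length count: the dimension shifts yield subquotients of $L^\vee$ rather than honest copies, and the Betti numbers of $\omega$ enter any naive Euler-characteristic estimate with alternating signs. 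My preferred workaround is to apply $\Hom_R(-, R)$ to the resolution $F_\bullet$, exploit the identification $R \cong \Hom_R(\omega, \omega)$ together with an Auslander-transpose argument, and thereby convert the subquotient relations into honest maps whose ranks can be compared via Matlis duality — producing a clean bound that forces $r \le 1$ without sign cancellations.
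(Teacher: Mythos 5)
This statement is quoted in the paper from Jorgensen--Leuschke \cite[Theorem~2.2]{JL07} without proof, so your argument has to stand on its own, and at present it does not. Your first two stages are sound in outline: reducing modulo $\ux$ does leave a finite-length module $\oL$ over the Cohen--Macaulay ring $\oR$ with $\lambda(\fm\oL)<\mu(\oL)$, a canonical module $\oomega$ with $\mu(\oomega)=r$, and exactly $r$ consecutive vanishings $\Ext_{\oR}^{i}(\oL,\oR)=0$ for $\dim(\oR)+1\le i\le\dim(\oR)+r$ (the window of length $r+\dim L$ in hypothesis (ii) loses one value at each peeling, which is precisely why the original range must start at $1+\dim(R)-\dim(L)$); and local duality does give $\Ext_R^{d}(L,\omega)\cong L^{\vee}$ with all higher Exts into $\omega$ vanishing. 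But the theorem is won or lost entirely in your third stage, and that stage is not an argument: you assert that a length count ``should force a contradiction,'' you yourself identify the obstruction (the first shift $\Ext_R^{d}(L,\omega)\to\Ext_R^{d+1}(L,\Omega_1^R(\omega))$ is only a surjection whose kernel is the image of $\Ext_R^{d}(L,R)^{r}$, since $\Ext_R^{d}(L,R)$ is not assumed to vanish, while the subsequent shifts are isomorphisms --- so all you actually obtain is a single unidentified quotient of $L^{\vee}$, not a chain of subquotients ``controlled by the Betti numbers''), and the proposed repair via the Auslander transpose and Matlis duality is announced rather than carried out. In particular, hypothesis (i) --- the Ulrich-type condition $\lambda(L)<2\mu(L)$, which is the whole point of the theorem --- is never actually used anywhere in the proof.

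Concretely, the missing idea is the mechanism by which $\lambda(L)<2\mu(L)$ interacts with the $r$ consecutive vanishings to bound $r$. A correct completion must extract from the minimal free resolution of $\omega$ and the vanishing window an honest length or rank inequality comparing a quantity proportional to $r\,\mu(L)$ (coming from $F_0=R^{r}$) with a quantity bounded by $\lambda(L)$, and then invoke $\lambda(L)<2\mu(L)$ to force $r\le 1$; your sketch never reaches such an inequality, and the alternating-sign problem you flag in any naive Euler-characteristic count is a genuine obstacle, not a technicality. Until that computation is carried out, the proposal establishes only the standard reductions. (Your final step is fine: $\mu(\omega)=1$ gives $\omega\cong R/\ann_R(\omega)\cong R$ because canonical modules over Cohen--Macaulay local rings are faithful, whence $R$ is Gorenstein.)
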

 
 Inspired by these results, we prove the following theorem, which gives some criterion for Gorenstein local rings via syzygy modules of the residue field.
 
 \begin{customthm}{\ref{thm: criteria for Gor by Ext}}\label{thm: III}
 	Let $ (R,\mathfrak{m},k) $ be a $ d $-dimensional CM local ring of minimal multiplicity. Let $ L $ be an MCM homomorphic image of a finite direct sum of syzygy modules of $ k $. If $ \Ext_R^i(L,R) = 0 $ for some $ (d + 1) $ consecutive values of $ i \ge 1 $, then $ R $ is Gorenstein.
 \end{customthm}
 
 In particular, as a consequence of Theorem~\ref{thm: III}, we obtain that if $ R $ is CM local of minimal multiplicity, and if a finite direct sum of syzygy modules of $ k $ maps onto a non-zero $ R $-module $ L $ such that $ \gdim_R(L) = 0 $ (Definition\;\ref{defn:G-dim}), then $ R $ is Gorenstein; see Corollary~\ref{cor: Tak ques}. This gives an affirmative answer to a question of Takahashi (\cite[Question~6.6]{Tak06}) for CM local rings of minimal multiplicity.
 
 In the same spirit, we obtain a few necessary and sufficient conditions for a CM local ring of minimal multiplicity to be Gorenstein by using canonical modules.
 
 \begin{customthm}{\ref{thm: criteria for Gor by can mod}}\label{thm: IV}
 	Along with the hypotheses as in {\rm Theorem~\ref{thm: III}}, suppose also that $ R $ has a canonical module $ \omega $. Then the following statements are equivalent:
 	\begin{enumerate}[{\rm(i)}]
 		\item
 		$ R $ is Gorenstein;
 		\item
 		$ \Ext_R^i(\omega,L) = 0 $ for some $ (d + 1) $ consecutive values of $ i \ge 1 $;
 		\item
 		$ \Tor_i^R(\omega,L) = 0 $ for some $ (d + 1) $ consecutive values of $ i \ge 1 $.
 	\end{enumerate}
 \end{customthm}
 
 The organization of this article is as follows. In Section~\ref{Sec: Preliminaries}, we give a few preliminaries which we use in the next sections.  Syzygies of arbitrary modules over CM local rings are studied in Section~\ref{Sec: Semidual summand of syz mod}, where Theorem~\ref{thm: I} is proved. This yields a characterization of Gorenstein local rings via syzygies of canonical modules (see Corollary~\ref{cor: criteria Gor syz of can mod}). Criteria for regular local rings and some examples are given in Section~\ref{Sec: Characterization of RLRs}; while the same for Gorenstein local rings are shown in Section~\ref{Sec: Characterization of Gor rings}.
 
\section{Preliminaries}\label{Sec: Preliminaries}
 
 Throughout this article, unless otherwise specified, all rings are assumed to be commutative Noetherian local rings, and all modules are assumed to be finitely generated. Moreover, $(R, \mathfrak{m}, k)$ always denotes a local ring with the unique maximal ideal $\mathfrak{m}$ and residue field $k$. For an $ R $-module $ M $, and $ n \ge 0 $, we denote the $n$th syzygy module of $M$ by $\Omega_n^R(M)$, i.e., the image of the $ n $th differential of an augmented minimal free resolution of $ M $. The module $\Omega_n^R(M)$ depends on the choice of a minimal free resolution of $ M $, but is unique up to isomorphism.
 
 To prove our main results, one may assume without loss of generality that the residue field $k$ is infinite from the following observation:
 
 \begin{para}\label{para: res field infinite}
 	If the residue field $ k $ is finite, we use the standard trick to replace $ R $ by $ R' := R[X]_{\mathfrak{m}R[X]} $, where $ X $ is an indeterminate. Clearly, residue field of $ R' $ is $ k(X) $, which is infinite. For every $ R $-module $ M $, we set $ M' := M \otimes_R R' $. Note that $ \mathfrak{m} R' $ is the maximal ideal of $ R' $. Therefore $ k' = k \otimes_R R' = R'/\mathfrak{m} R' $ is the residue field of $ R' $. Hence we obtain the following equalities for Hilbert functions:
 	\[
	 	\lambda_R\left( \mathfrak{m}^n M/\mathfrak{m}^{n+1} M \right) = \lambda_{R'}\left( {\mathfrak{m}'}^n M'/{\mathfrak{m}'}^{n+1} M' \right) \quad \mbox{for all }n \ge 0.
 	\]
 	Therefore we have the following:
 	\begin{enumerate}[(i)]
 		\item $ e(\mathfrak{m}, M) = e(\mathfrak{m}', M') $. In particular, $ e(R) = e(R') $.
 		\item $ \dim(M) = \dim(M') $. In particular, $ \dim(R) = \dim(R') $.
 		\item $ \mu(\mathfrak{m}) = \mu(\mathfrak{m}') $.
 	\end{enumerate}
 	Recall that $ R $ is said to have minimal multiplicity if $ e(R) = \mu(\mathfrak{m}) - \dim(R) + 1 $. So $ R $ has minimal multiplicity if and only if $ R' $ has minimal multiplicity. In view of (ii) and (iii), we also get that $ R $ is regular if and only if $ R' $ is regular.
 	
 	Note that $ R \to R' $ is a flat local extension. Hence $ R \to R' $ is faithfully flat.
 	Since $ R \to R' $ is flat, for $ R $-modules $ M $ and $ N $, we have
 	\[
	 	\Ext_R^i(M,N) \otimes_R R' \cong \Ext_{R'}^i(M',N') ~\mbox{ and }~
	 	\Tor_i^R(M,N) \otimes_R R' \cong \Tor_i^{R'}(M',N')
 	\]
 	for all $ i \in \mathbb{Z} $. Since $ R \to R' $ is faithfully flat, we obtain that
 	\begin{align}
 		\Ext_R^i(M,N) = 0 \quad\mbox{if and only if }\quad\Ext_{R'}^i(M',N') = 0;\label{Ext-ring extn}\\
 		\Tor_i^R(M,N) = 0 \quad\mbox{if and only if }\quad\Tor_i^{R'}(M',N') = 0.\label{Tor-ring extn}
 	\end{align}
 	Since $ \depth(M) = \min\{ i : \Ext_R^i(k,M) \neq 0 \} $, in view of \eqref{Ext-ring extn}, we get that $ \depth(M) = \depth(M') $. Thus $ M $ is CM (resp. MCM) if and only if $ M' $ is CM (resp. MCM). Recall that an $ R $-module $ M $ is a {\it canonical module} of $ R $ if and only if 
 	\[
	 	\rank_k\left( \Ext_R^i(k,M) \right) = \delta_{i d} ,\quad\mbox{where } d = \dim(R).
	\]
	Therefore, in view of \eqref{Ext-ring extn}, we obtain that $ \omega $ is a canonical module of $ R $ if and only if $ \omega' $ is a canonical module of $ R' $. Moreover, it follows that $ R $ is Gorenstein if and only if $ R' $ is Gorenstein.
 \end{para}
 
 \begin{para}\label{para:soc-and-ann-of-syz}
 	Let $ M $ be an $ R $-module. For every $ n \ge 1 $, since $ \Omega_n^R(M) $ is a submodule of $ \mathfrak{m} F $ for some free module $ F $, one can easily obtain the following relation between the socle of the ring and the annihilator of the syzygy modules:
 	\[
 		\Soc(R) \subseteq \ann_R\left(\Omega_n^R(M)\right) \; \mbox{ for all } n \ge 1.
 	\]
 \end{para}
 
 \begin{para}
 	Let $ (R,\mathfrak{m},k) $ be a CM local ring. Let $ x \in R $ be an $ R $-regular element. It is not always true that $ e(R) = e(R/(x)) $. So, if $ R $ has minimal multiplicity, then it is not necessarily true that $ R/(x) $ has minimal multiplicity. This statement holds true if $ x $ is an $ R $-superficial element.
 	Recall that an element $ x \in \mathfrak{m} $ is called $ R $-{\it superficial} if there exists a positive integer $ c $ such that
 	\[
 		\left( \mathfrak{m}^{n+1} :_R x \right) \cap \mathfrak{m}^c = \mathfrak{m}^n \quad\mbox{for all } n \ge c.
 	\]
 	It is well known that if $ k $ is infinite, then there exists an $ R $-superficial element; see \cite[page~7]{Sal78}. If $ \depth(R) \ge 1 $, then for every $ R $-superficial element $ x $, it can be easily shown that $ x \notin \mathfrak{m}^2 $, which yields that $ \mu(\mathfrak{m}/(x)) = \mu(\mathfrak{m}) - 1 $. In \cite[Lemma\;4(1)]{Kub85}, it is shown that if $ \depth(R) \ge 1 $, then every $ R $-superficial element is also $ R $-regular. Moreover, if $ x \in R $ is both $ R $-superficial and $ R $-regular, then $ e(R) = e(R/(x)) $; see \cite[Lemma\;4(3)]{Kub85}. So we obtain the following:
 \end{para}
 
 \begin{lemma}\label{lem: mod x and min mult}
 	Let $ (R,\mathfrak{m},k) $ be a CM local ring of minimal multiplicity. Assume that $ k $ is infinite. If $ \dim(R) \ge 1 $, then we may choose an $ R $-regular element $ x \in \mathfrak{m} \smallsetminus \mathfrak{m}^2 $ such that $ R/(x) $ has minimal multiplicity.
 \end{lemma}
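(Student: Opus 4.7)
The plan is to construct $x$ as an $R$-superficial element in $\fm$, and then verify that this $x$ simultaneously satisfies all three conditions we need: it lies in $\fm\smallsetminus\fm^2$, it is $R$-regular, and passing to $R/(x)$ preserves the minimal multiplicity equality. Essentially, every ingredient needed has been collected in the discussion immediately preceding the lemma, so this will be an assembly rather than a new argument.

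First I would invoke the standard existence result: because $k$ is infinite, $\fm$ contains an $R$-superficial element $x$. Next, since $R$ is CM and $\dim(R)\ge 1$, we have $\depth(R)\ge 1$, so the results of Kubik cited in the discussion apply: namely, $x$ is $R$-regular, and $x\notin\fm^2$, so in particular
\[
\mu\bigl(\fm/(x)\bigr) \;=\; \mu(\fm)-1.
\]
Moreover, from the same citation, $e(R)=e(R/(x))$ because $x$ is both $R$-regular and $R$-superficial. Since $x$ is $R$-regular on the CM ring $R$, the quotient $R/(x)$ is again CM with $\dim(R/(x))=\dim(R)-1$.

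Finally I would use the minimal-multiplicity hypothesis $e(R)=\mu(\fm)-\dim(R)+1$ to write
\[
e\bigl(R/(x)\bigr) \;=\; e(R) \;=\; \mu(\fm)-\dim(R)+1 \;=\; \bigl(\mu(\fm/(x))+1\bigr)-\bigl(\dim(R/(x))+1\bigr)+1,
\]
which simplifies to $e(R/(x))=\mu(\fm/(x))-\dim(R/(x))+1$. By Abhyankar's inequality applied to $R/(x)$, this is precisely the statement that $R/(x)$ has minimal multiplicity.

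There is no real obstacle: the only delicate point is the choice of $x$, but the discussion has already recorded the needed facts (existence of a superficial element when $k$ is infinite, $x\notin\fm^2$ and $x$ regular when $\depth\ge 1$, and preservation of multiplicity by modding out a regular superficial element). The proof is therefore a short bookkeeping argument built directly on those facts.
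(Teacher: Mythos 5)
Your proof is correct and is essentially the argument the paper intends: the lemma is stated as a direct consequence of the preceding discussion of superficial elements (existence for infinite $k$, $x\notin\fm^2$ and $x$ regular when $\depth(R)\ge 1$, $\mu(\fm/(x))=\mu(\fm)-1$, and $e(R)=e(R/(x))$), and your bookkeeping with Abhyankar's inequality is exactly the missing verification. (Minor point: the cited author is Kubota, not Kubik.)
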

 
 The following lemma concerns the behaviour of consecutive vanishing of Exts and Tors after going modulo a regular element. It might be known to experts.
 
 \begin{lemma}\label{lemma: Ext Tor mod x}
 	Let $ R $ be a CM local ring of dimension $ d \ge 1 $. Suppose $ M $ and $ N $ are MCM $ R $-modules. Let $ x $ be an $ R $-regular element. Set $\overline{(-)} := (-) \otimes_R R/(x)$. Suppose $ m $ and $ n $ are positive integers. If $ \Ext_R^i(M,N) = 0 $ {\rm(}resp. $ \Tor_i^R(M,N) = 0 ${\rm )} for all $ n \le i \le n + m $, then
 	\begin{align*}
 	&\Ext_{\overline{R}}^i(\overline{M},\overline{N}) = 0 \quad\mbox{for all } n \le i \le n + m - 1\\
 	(\mbox{resp.} \quad &\Tor_i^{\overline{R}}(\overline{M},\overline{N}) = 0 \quad\mbox{for all } n + 1 \le i \le n + m).
 	\end{align*}
 \end{lemma}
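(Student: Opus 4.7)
The plan is to reduce everything to the change-of-rings isomorphisms
\[
\Ext_{\oR}^{i}(\oM,\oN)\;\cong\;\Ext_R^{i}(M,\oN)
\qquad\text{and}\qquad
\Tor_i^{\oR}(\oM,\oN)\;\cong\;\Tor_i^{R}(M,\oN),
\]
and then to exploit the short exact sequence $0\to N\xrightarrow{x}N\to\oN\to 0$.

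First I would note that since $M$ and $N$ are MCM over the CM ring $R$, every associated prime of $M$ or $N$ is a minimal prime of $R$; hence the $R$-regular element $x$ is automatically regular on both $M$ and $N$. In particular, if $F_\bullet\to M$ is a free resolution over $R$, then $F_\bullet\otimes_R\oR$ is a free resolution of $\oM$ over $\oR$. Applying $\Hom_{\oR}(-,\oN)$ and $-\otimes_{\oR}\oN$ to this resolution and comparing with $\Hom_R(F_\bullet,\oN)$ and $F_\bullet\otimes_R\oN$ yields the two change-of-rings isomorphisms above.

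Next I would feed the short exact sequence $0\to N\xrightarrow{x}N\to\oN\to 0$ into $\Ext_R(M,-)$ to obtain
\[
\cdots\to\Ext_R^{i}(M,N)\xrightarrow{x}\Ext_R^{i}(M,N)\to\Ext_R^{i}(M,\oN)\to\Ext_R^{i+1}(M,N)\to\cdots,
\]
and into $-\otimes_R M$ (equivalently $\Tor^R(M,-)$) to obtain
\[
\cdots\to\Tor_{i}^R(M,N)\xrightarrow{x}\Tor_i^R(M,N)\to\Tor_i^R(M,\oN)\to\Tor_{i-1}^R(M,N)\to\cdots.
\]
If $\Ext_R^{i}(M,N)$ vanishes at both $i$ and $i+1$, the first sequence forces $\Ext_R^{i}(M,\oN)=0$, which via the change-of-rings isomorphism gives $\Ext_{\oR}^{i}(\oM,\oN)=0$. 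Under the hypothesis that $\Ext_R^{i}(M,N)=0$ for $n\le i\le n+m$, this covers exactly the range $n\le i\le n+m-1$. The Tor statement is parallel: vanishing of $\Tor^R$ at $i$ and $i-1$ forces $\Tor_i^R(M,\oN)=0$, so the range $n\le i\le n+m$ of vanishing Tor yields $\Tor_i^{\oR}(\oM,\oN)=0$ for $n+1\le i\le n+m$.

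There is no real obstacle; the only point requiring a little care is verifying that $x$ is regular on both $M$ and $N$ (so that the change-of-rings isomorphisms apply), which follows cleanly from the MCM hypothesis together with the CM hypothesis on $R$. Everything else is a direct reading off of the long exact sequences, and the shift by one in the index range is an inherent feature of using a single regular element rather than vanishing of $\Tor_1^R(M,\oR)$ and $\Ext^1$ simultaneously.
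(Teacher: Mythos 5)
Your proof is correct. The strategy is essentially the paper's --- a long exact sequence induced by multiplication by $x$, spliced with a change-of-rings isomorphism --- but you run it through the other variable: the paper feeds $0\to M\xrightarrow{x}M\to\oM\to 0$ into the \emph{first} argument and then invokes the degree-shifting Rees isomorphisms $\Ext_R^{i+1}(\oM,N)\cong\Ext_{\oR}^{i}(\oM,\oN)$ and $\Tor_i^R(\oM,N)\cong\Tor_i^{\oR}(\oM,\oN)$ (which need $x$ regular on $R$ and $N$), whereas you feed $0\to N\xrightarrow{x}N\to\oN\to 0$ into the \emph{second} argument and use the degree-preserving isomorphisms $\Ext_{\oR}^{i}(\oM,\oN)\cong\Ext_R^{i}(M,\oN)$ and $\Tor_i^{\oR}(\oM,\oN)\cong\Tor_i^R(M,\oN)$, obtained by reducing a free resolution of $M$ modulo $x$ (which needs $x$ regular on $R$ and $M$). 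Since $M$ and $N$ are both MCM, both sets of regularity hypotheses hold, and your index bookkeeping (two consecutive vanishings of $\Ext$ at $i,i+1$, resp.\ of $\Tor$ at $i-1,i$, killing the middle term) reproduces exactly the ranges $n\le i\le n+m-1$ and $n+1\le i\le n+m$ in the statement. The only practical difference is that your route makes the asymmetry between the Ext and Tor ranges come from the direction of the connecting maps rather than from a degree shift in the change-of-rings lemma; either way the content is the same.
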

 
 \begin{proof}
 	Since $ M $ and $ N $ are MCM $ R $-modules, $ x $ is regular on both $ M $ and $ N $. Considering the long exact sequence of Ext (resp. Tor) modules corresponding to the short exact sequence
 	\[
	 	0 \longrightarrow M \stackrel{x\cdot}{\longrightarrow} M \longrightarrow \overline{M} \longrightarrow 0,
 	\]
 	in view of $ \Ext_R^i(M,N) = 0 $ (resp. $ \Tor_i^R(M,N) = 0 $) for all $ n \le i \le n + m $, we obtain that
 	\begin{equation}\label{proposition: Ext Tor mod x: equation 1}
	 	\Ext_R^{i+1}(\overline{M},N) = 0  \quad (\mbox{resp. } \Tor_{i+1}^R(\overline{M},N) = 0)
 	\end{equation}
 	for all $ n \le i \le  n + m - 1 $.	Since $ x $ is both $ R $-regular and $ N $-regular, we have
 	\begin{equation}\label{proposition: Ext Tor mod x: equation 2}
 	\Ext_R^{i+1}(\overline{M},N) \cong \Ext_{\overline{R}}^i(\overline{M},\overline{N}) ~\mbox{ and }~ \Tor_i^R(\overline{M},N) \cong \Tor_i^{\overline{R}}(\overline{M},\overline{N})
 	\end{equation}
 	for all $ i \ge 0 $; see, e.g., \cite[page~140, Lemma~2]{Mat86}. The result now follows from \eqref{proposition: Ext Tor mod x: equation 1} and \eqref{proposition: Ext Tor mod x: equation 2}.
 \end{proof}
 
\section{Semidualizing summands of syzygy modules}\label{Sec: Semidual summand of syz mod}

In this section, we study the syzygy modules over CM local rings. We start by recalling a well-known fact about syzygy modules; see, e.g., \cite[1.1.5]{BH98}:

\begin{lemma}\label{lem: syz of mcm mod x}
	Let $ R $ be a local ring, and $ M $ be an $ R $-module. Suppose $ x \in R $ is regular on both $ R $ and $ M $. Set $ \overline{(-)} := (-) \otimes_R R/(x) $. Then we have
	\[ 
	\overline{\Omega_n^R(M)} \cong \Omega_n^{\overline{R}}(\overline{M}) \quad \mbox{for all }n \ge 0.
	\]
\end{lemma}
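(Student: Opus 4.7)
The plan is to show that a minimal free resolution of $M$ over $R$, when tensored with $\overline{R}$, becomes a minimal free resolution of $\overline{M}$ over $\overline{R}$. Once this is established, the assertion about syzygies follows immediately by inspecting the $n$th syzygy of the resulting resolution.

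First I would take a minimal free resolution $F_\bullet \to M$ over $R$, so $\Omega_n^R(M)$ is the image of $F_n \to F_{n-1}$ for $n\ge 1$ (the case $n=0$ is trivial since $\Omega_0^R(M)=M$). I would then verify that $F_\bullet \otimes_R \overline{R}$ is acyclic in positive degrees by computing $\Tor_i^R(M,\overline{R})$ from the short free resolution $0\to R \xrightarrow{x} R \to \overline{R}\to 0$: since $x$ is $M$-regular, the complex $0 \to M \xrightarrow{x} M \to 0$ is exact in the first spot, giving $\Tor_i^R(M,\overline{R})=0$ for all $i\ge 1$ and $\Tor_0^R(M,\overline{R})=\overline{M}$. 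Hence $F_\bullet\otimes_R \overline{R}$ is a free resolution of $\overline{M}$ over $\overline{R}$, and minimality is preserved because the entries of the differentials lie in $\fm$ and therefore reduce into $\fm/(x)$, the maximal ideal of $\overline{R}$.

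Next I would translate this into the claimed isomorphism for syzygies. For $n\ge 1$, consider the short exact sequence
\[
0 \lra \Omega_n^R(M) \lra F_{n-1} \lra \Omega_{n-1}^R(M) \lra 0.
\]
Since $\Omega_{n-1}^R(M)$ sits inside the free module $F_{n-2}$ (or equals $M$ when $n=1$, on which $x$ is regular by hypothesis), $x$ is $\Omega_{n-1}^R(M)$-regular, so $\Tor_1^R(\Omega_{n-1}^R(M),\overline{R})=0$. Tensoring with $\overline{R}$ therefore yields a short exact sequence
\[
0 \lra \overline{\Omega_n^R(M)} \lra \overline{F_{n-1}} \lra \overline{\Omega_{n-1}^R(M)} \lra 0,
\]
which identifies $\overline{\Omega_n^R(M)}$ with the image of the $n$th differential of the minimal free resolution $F_\bullet\otimes_R\overline{R}$ of $\overline{M}$, i.e.\ with $\Omega_n^{\overline{R}}(\overline{M})$.

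Since everything reduces to standard Tor vanishing coming from a single regular element, I do not foresee a real obstacle; the only subtlety is bookkeeping, namely verifying regularity of $x$ on each intermediate syzygy so that the Tor-vanishing needed to split the long exact sequence into the desired short exact sequences is available. This is handled either by induction on $n$ or, more directly, by noting that $\Omega_n^R(M)$ embeds into the free module $F_{n-1}$ for $n\ge 1$, on which $x$ is $R$-regular and hence regular.
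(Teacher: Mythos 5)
Your proof is correct, and it is the standard argument for this fact; the paper itself states the lemma as well known and gives no proof, so there is nothing to diverge from. Both halves of your write-up are sound: the Tor computation from $0\to R\xrightarrow{x} R\to \oR\to 0$ shows $F_\bullet\otimes_R\oR$ stays a (minimal) resolution, and the regularity of $x$ on each $\Omega_{n-1}^R(M)$ --- either as $M$ itself or as a submodule of a free module --- gives exactly the Tor-vanishing needed to identify $\overline{\Omega_n^R(M)}$ with $\Omega_n^{\oR}(\oM)$.
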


We prove our main result of this section by reducing to the following base case.

\begin{lemma}\label{lem: syz of mcm: depth 0: no free image}
	Let $ R $ be a local ring with $ \depth(R) = 0 $. Suppose $ M $ is an $ R $-module. Let $ L $ be a non-zero homomorphic image of a finite direct sum of $ \Omega_n^R(M) $, $ n \ge 1 $. Then $ L $ cannot be a semidualizing $ R $-module.
\end{lemma}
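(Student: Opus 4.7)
The plan is to combine the annihilator bound of Lemma~\ref{lem: soc and ann of syz} with the defining isomorphism of a semidualizing module to reach a contradiction at the socle.

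First I would record the depth hypothesis: since $\depth(R)=0$, we have $\fm \in \Ass(R)$, so $\Soc(R) \neq 0$. In particular we can pick a non-zero element $s \in \Soc(R)$.

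Next, by hypothesis there is a surjection
\[
\varphi \colon \bigoplus_{j=1}^{t} \Omega_{n_j}^{R}(M) \twoheadrightarrow L
\]
with every $n_j \ge 1$. By Lemma~\ref{lem: soc and ann of syz}, $\Soc(R) \subseteq \ann_R \bigl(\Omega_{n_j}^R(M)\bigr)$ for each $j$, so $\Soc(R)$ annihilates the whole direct sum and therefore, via $\varphi$, annihilates $L$. In particular $sL = 0$.

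Finally, suppose for contradiction that $L$ is semidualizing. Then the canonical homothety map $\chi_L \colon R \to \Hom_R(L,L)$ is an isomorphism, and in particular injective. But $sL = 0$ means $\chi_L(s) = 0$, forcing $s = 0$, which contradicts the choice of $s$. The main (and essentially only) conceptual point is to exploit that injectivity of $\chi_L$ translates the annihilator statement into a statement about $R$ itself; everything else is bookkeeping. Hence $L$ cannot be semidualizing.
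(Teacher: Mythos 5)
Your proof is correct and follows essentially the same route as the paper's: apply Lemma~\ref{lem: soc and ann of syz} to see that $\Soc(R)$ annihilates $L$, then use the isomorphism $R \cong \Hom_R(L,L)$ to force $\Soc(R) = 0$, contradicting $\depth(R) = 0$. The only cosmetic difference is that you argue via a chosen nonzero socle element while the paper phrases the same step as $\ann_R(L) = 0$ implying $\Soc(R)=0$.
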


\begin{proof}
	Assume that $ f : \bigoplus_{n \in \Lambda} \left( \Omega_n^R(M) \right)^{j_n} \longrightarrow L $ is a surjective $ R $-module homomorphism, where $ \Lambda $ is a finite collection of positive integers, and $ j_n $, $ n \in \Lambda $, are positive integers. Then, in view of Section~\ref{para:soc-and-ann-of-syz}, we obtain that
	\begin{equation}\label{lem: syz of mcm: depth 0: no free image: eqn 1}
		\Soc(R) \subseteq \ann_R\left( \bigoplus_{n \in \Lambda} \big( \Omega_n^R(M) \big)^{j_n}  \right) \subseteq \ann_R(L).
	\end{equation}
	If possible, assume that $ L $ is a semidualizing $ R $-module. Then, by the definition of semidualizing modules, we have $ \Hom_R(L,L) \cong R $, which implies that $ \ann_R(L) = 0 $. So, by \eqref{lem: syz of mcm: depth 0: no free image: eqn 1}, we see that $ \Soc(R) = 0 $, and hence $ \depth(R) \ge 1 $, which contradicts the hypothesis $ \depth(R) = 0 $. Therefore $ L $ cannot be a semidualizing $ R $-module.
\end{proof}

Now we can achieve one of the main results of this article.

\begin{theorem}\label{thm: syz mcm no free image}
	Let $ R $ be a CM local ring. Suppose $ M $ is an MCM $ R $-module. Let $ L $ be a non-zero homomorphic image of a finite direct sum of $ \Omega_n^R(M) $, $ n \ge 1 $. Then $ L $ cannot be a semidualizing $ R $-module. In particular, $ L $ cannot be free, or $ L $ cannot be an MCM $ R $-module of finite injective dimension.
\end{theorem}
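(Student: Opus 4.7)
My plan is to establish the semidualizing claim by induction on $d = \dim(R) = \depth(R)$, with the base case $d = 0$ being exactly Lemma~\ref{lem: syz of mcm: depth 0: no free image}. The ``In particular'' clause will then follow immediately from the first part by projecting onto a direct summand, so I will focus on the induction.

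For the inductive step, I would assume $d \ge 1$ and that the statement has been proved for CM local rings of dimension smaller than $d$. Suppose, towards a contradiction, that there is a surjection $f : \bigoplus_{n \in \Lambda} \Omega_n^R(M)^{j_n} \twoheadrightarrow L$ with $\Lambda \subseteq \bZ_{\ge 1}$ finite and $L$ a semidualizing $R$-module. The first step is to observe that a semidualizing module over a CM local ring satisfies $\depth_R(L) = \depth(R) = d \ge 1$, so $L$ is MCM. By prime avoidance I then pick $x \in \fm$ simultaneously regular on $R$, on $M$, and on $L$, and set $\oR := R/(x)$. I would verify that $\oR$ is CM of dimension $d - 1$, that $\oM$ is MCM over $\oR$, that $\oL \ne 0$ by Nakayama, and, via the standard change-of-rings isomorphisms for $\Hom$ and $\Ext$, that $\oL$ is semidualizing over $\oR$. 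Tensoring $f$ with $\oR$ and invoking Lemma~\ref{lem: syz of mcm mod x} yields a surjection
\[
\bigoplus_{n \in \Lambda} \Omega_n^{\oR}(\oM)^{j_n} \twoheadrightarrow \oL,
\]
which contradicts the inductive hypothesis and closes the induction.

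For the ``In particular'' statement, note that $R$ is semidualizing and, when $R$ admits a canonical module $\omega$, so is $\omega$. If $L$ were free, composing $f$ with projection onto a rank-one free summand would give a surjection of the required form onto $R$; likewise, if $L$ were MCM of finite injective dimension, a classical result of Sharp forces $L \cong \omega^t$ for some $t \ge 1$, and projecting onto a single copy of $\omega$ yields a surjection of the same form onto the semidualizing module $\omega$. Either case contradicts the first part. The step I expect to require the most care is the depth identity $\depth_R(L) = \depth(R)$ for a semidualizing $L$: it is what guarantees that $x$ can be chosen $L$-regular and what lets the semidualizing property descend to $\oR$. This is a well-known feature of semidualizing modules, and once it is in hand the induction proceeds without further obstruction.
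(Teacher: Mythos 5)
Your argument is correct and is essentially the paper's proof: both reduce to the depth-zero case (Lemma~\ref{lem: syz of mcm: depth 0: no free image}) by passing to a quotient by a regular sequence, using Lemma~\ref{lem: syz of mcm mod x} to identify $\overline{\Omega_n^R(M)}$ with $\Omega_n^{\oR}(\oM)$ and the fact that the semidualizing property descends modulo a regular element; the paper simply kills a full system of parameters $\ux=x_1,\dots,x_d$ in one step and quotes Golod for the descent, rather than inducting on $d$ one element at a time. Two small remarks. First, the step you single out as delicate --- that $\depth_R(L)=\depth(R)$ so that $x$ can be chosen $L$-regular --- is dispensable: Golod's descent statement needs $x$ only to be $R$-regular, and in any case $\Ass_R(L)=\Ass_R\bigl(\Hom_R(L,L)\bigr)=\Ass(R)$ for semidualizing $L$, so every $R$-regular element is automatically $L$-regular. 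Second, there is a genuine (though easily repaired) gap in the last clause: your appeal to Sharp's decomposition $L\cong\omega^{t}$ presupposes that $R$ possesses a canonical module, which is not automatic for a CM local ring. The paper handles this by first passing to the completion --- $\widehat{L}$ remains a non-zero MCM module of finite injective dimension and a homomorphic image of the corresponding syzygies of $\widehat{M}$ over $\widehat{R}$, and $\widehat{R}$ does have a canonical module --- and you should insert the same reduction before invoking Sharp.
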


\begin{proof}
	Set $ d:= \dim(R) $. Since $ R $ is CM, there exists an $ R $-regular sequence $ \underline{x} = x_1,\ldots,x_d $ of length $ d $. We set $ \overline{(-)} := (-) \otimes_R R/(\underline{x}) $. So $ \overline{R} $ is an Artinian local ring. Let $	f : \bigoplus_{n \in \Lambda} \left( \Omega_n^R(M) \right)^{j_n} \longrightarrow L $ be a surjective $ R $-module homomorphism, where $ \Lambda $ is a finite collection of positive integers, and $ j_n $, $ n \in \Lambda $, are positive integers. Tensoring $ f $ with $ R/(\underline{x}) $, we get that
	\begin{equation}\label{thm: syz mcm no free image: eqn 1}
	\overline{f} : \bigoplus_{n \in \Lambda} \left( \overline{\Omega_n^R(M)} \right)^{j_n} \longrightarrow \overline{L}
	\end{equation}
	is a surjective $ \overline{R} $-module homomorphism. Since $ \underline{x} $ is $ R $-regular, and $ M $ is MCM, we have that $ \underline{x} $ is an $ M $-regular sequence. Hence, by virtue of Lemma~\ref{lem: syz of mcm mod x}, inductively, it can be deduced that
	\begin{equation}\label{thm: syz mcm no free image: eqn 2}
	\overline{\Omega_n^R(M)} \cong \Omega_n^{\overline{R}}(\overline{M}) \quad \mbox{for all } n \ge 1.
	\end{equation}
	Therefore, in view of \eqref{thm: syz mcm no free image: eqn 1} and \eqref{thm: syz mcm no free image: eqn 2}, we obtain that $ \overline{L} $ is a non-zero homomorphic image of a finite direct sum of syzygy modules $ \Omega_n^{\overline{R}}(\overline{M}) $, $ n \ge 1 $.
	
	If possible, assume that $ L $ is a semidualizing $ R $-module. Since $ \underline{x} $ is an $ R $-regular sequence, by virtue of \cite[page~68]{Gol84}, we have that $ \overline{L} $ is a semidualizing $ \overline{R} $-module, which contradicts the fact $ \depth(\overline{R})  = 0 $ as we see in Lemma~\ref{lem: syz of mcm: depth 0: no free image}. Therefore $ L $ cannot be a semidualizing $ R $-module.
	
	Since $ R $ itself is a semidualizing $ R $-module, we obtain that $ L $ cannot be free.
	
	For the last part, without loss of generality, we may assume that $ R $ is complete. Then $ R $ has a canonical module $ \omega $, say. It is well known that every MCM $ R $-module of finite injective dimension can be written as a direct sum of copies of $ \omega $; see, e.g., \cite[Corollary~21.14]{Eis95}.	Since canonical module $\omega$ is a semidualizing $ R $-module (\cite[3.3.10]{BH98}), in view of the first part, we obtain that $ L $ cannot be an MCM $ R $-module of finite injective dimension.
\end{proof}

As an immediate corollary of Theorem~\ref{thm: syz mcm no free image}, we obtain the following result.

\begin{corollary}\label{cor: syz no free summand}
	Let $ R $ be a $ d $-dimensional CM local ring. Suppose $ M $ is an $ R $-module, not necessarily MCM. Then $ \Omega_n^R(M) $ $( n \ge d + 1 )$ cannot have a non-zero direct summand $ L $ of the following types:
	\begin{enumerate}[{\rm (i)}]
		\item $ L $ is a semidualizing $ R $-module.
		\item $ L $ is a free $ R $-module.
		\item $ \injdim_R(L) $ is finite.
	\end{enumerate}
\end{corollary}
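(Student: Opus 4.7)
The plan is to reduce to Theorem~\ref{thm: syz mcm no free image} by passing from $M$ to the MCM module $N := \Omega_d^R(M)$. The whole point of the hypothesis $n \ge d+1$ is precisely to ensure that the syzygy $\Omega_n^R(M)$ is a ``high'' syzygy (degree $\ge 1$) of an \mcm module, so that Theorem~\ref{thm: syz mcm no free image} applies.

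First I would invoke the standard depth-lemma fact that over a $d$-dimensional CM local ring, $\Omega_d^R(M)$ has depth at least $d$, and is therefore MCM (discarding the trivial case $\Omega_d^R(M) = 0$, in which $M$ has finite projective dimension and the claim is vacuous or immediate). Setting $N := \Omega_d^R(M)$, truncating a minimal free resolution of $M$ at homological degree $d$ yields a minimal free resolution of $N$, whence
\[
    \Omega_n^R(M) \;\cong\; \Omega_{n-d}^R(N) \quad \text{for all } n \ge d+1,
\]
and here $n - d \ge 1$.

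Now suppose $L$ is a non-zero direct summand of $\Omega_n^R(M)$. Composing the isomorphism above with the canonical projection $\Omega_n^R(M) \twoheadrightarrow L$ exhibits $L$ as a non-zero homomorphic image of the syzygy module $\Omega_{n-d}^R(N)$ of the MCM module $N$ with $n - d \ge 1$. Theorem~\ref{thm: syz mcm no free image} then forbids $L$ from being semidualizing, from being free, or from being an \mcm $R$-module of finite injective dimension, giving (i) and (ii) at once. For (iii), the finite-injective-dimension case, I would first observe that $L$, as a direct summand of the MCM module $\Omega_n^R(M)$ (with $n \ge d+1 \ge d$), is itself MCM; hence $L$ is an MCM module of finite injective dimension, and the last clause of Theorem~\ref{thm: syz mcm no free image} applies directly.

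There is no real obstacle here; the work is bookkeeping. The only point that requires a moment's care is the identification $\Omega_n^R(M) \cong \Omega_{n-d}^R(N)$ obtained from a minimal free resolution of $M$, and the (standard) verification that $\Omega_n^R(M)$ is MCM once $n \ge d$, so that a summand of finite injective dimension is automatically MCM and covered by the last sentence of Theorem~\ref{thm: syz mcm no free image}.
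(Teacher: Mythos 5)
Your proposal is correct and follows essentially the same route as the paper: both reduce to Theorem~\ref{thm: syz mcm no free image} by observing that $\Omega_d^R(M)$ is MCM (so $\Omega_n^R(M)$ for $n\ge d+1$ is a positive syzygy of an MCM module), and both handle (iii) by noting that a direct summand of the MCM module $\Omega_n^R(M)$ is itself MCM. Your explicit treatment of the trivial case $\Omega_d^R(M)=0$ is a harmless extra precaution.
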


\begin{proof}
	Note that every non-zero syzygy module $ \Omega_n^R(M) $ (where $ n \ge d $) is an MCM $ R $-module; see, e.g., \cite[1.3.7]{BH98}. Since $ \Omega_d^R(M) $ is MCM, (i) and (ii) simply follows from Theorem~\ref{thm: syz mcm no free image}. For (iii), note that any non-zero direct summand of $ \Omega_n^R(M) $ (where $ n \ge d + 1 $) is also an MCM $ R $-module. Therefore (iii) also follows from Theorem~\ref{thm: syz mcm no free image}.
\end{proof}

The following elementary example shows that there exists an $ R $-module $ M $ such that for every $ 0 \le n \le d $, $ \Omega_n^R(M) $ has a non-zero free direct summand.

\begin{example}\label{exam: free summand for syz for o le n le d}
	Let $ R $ be a $ d $-dimensional CM local ring. Let $ x_1,\ldots,x_d $ be an $ R $-regular sequence. Fix $ n $ with $ 1 \le n \le d $. Recall that the Koszul complex $ K_{\bullet}(x_1,\ldots,x_n ; R) $ takes the form
	\[
		0 \longrightarrow  R \longrightarrow R^{\binom{n}{n-1}} \longrightarrow R^{\binom{n}{n-2}} \longrightarrow \cdots \longrightarrow R^{\binom{n}{1}} \longrightarrow R \longrightarrow 0.
	\]
	Moreover, since $ x_1,\ldots,x_n $ is an $ R $-regular sequence, we have that $ K_{\bullet}(x_1,\ldots,x_n ; R) $ is a minimal free resolution of $ R/(x_1,\ldots,x_n) $. Hence $ \Omega_n^R(R/(x_1,\ldots,x_n)) = R $ for every $ 1 \le n \le d $. We set
	\[
	M := R \oplus R/(x_1) \oplus R/(x_1,x_2) \oplus \cdots \oplus R/(x_1,\ldots,x_d).
	\]
	Clearly, for every $ 0 \le n \le d $, $ R $ is a direct summand of $ \Omega_n^R(M) $.
\end{example}

As an application of Theorem~\ref{thm: syz mcm no free image}, we obtain the following characterization of Gorenstein local rings via syzygies of canonical modules.

\begin{corollary}\label{cor: criteria Gor syz of can mod}
	Let $ R $ be a CM local ring with canonical module $ \omega $. Then the following statements are equivalent:
	\begin{enumerate}[{\rm (i)}]
		\item $ R $ is Gorenstein;
		\item $ \Omega_n^R(\omega) $ has a non-zero free direct summand for some $ n \ge 0 $.
	\end{enumerate}
\end{corollary}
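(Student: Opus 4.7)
The plan is to use Theorem~\ref{thm: syz mcm no free image} to reduce everything down to the case $n=0$, and then extract Gorensteinness directly from the semidualizing property of $\omega$.

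For the direction (i) $\Rightarrow$ (ii), if $R$ is Gorenstein, then $\omega \cong R$, so taking $n = 0$ we see that $\Omega_0^R(\omega) = \omega \cong R$ is itself free, giving the required non-zero free direct summand.

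For (ii) $\Rightarrow$ (i), suppose that $\Omega_n^R(\omega)$ has a non-zero free direct summand for some $n \ge 0$. Recall that over a CM local ring $R$, the canonical module $\omega$ is an MCM $R$-module (see, e.g., \cite[3.3.5]{BH98}). Therefore, if $n \ge 1$, then Theorem~\ref{thm: syz mcm no free image} applied with $M := \omega$ rules out the existence of a non-zero free (hence semidualizing) direct summand of $\Omega_n^R(\omega)$. Consequently, we must have $n = 0$, i.e., $\omega$ itself has a non-zero free direct summand.

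It remains to show that if $\omega \cong R \oplus N$ for some $R$-module $N$, then $N = 0$, which forces $\omega \cong R$, hence $R$ is Gorenstein. Using the first defining property of the semidualizing module $\omega$, namely $\Hom_R(\omega,\omega) \cong R$, expanding both sides yields
\[
R \;\cong\; \Hom_R(R \oplus N,\, R \oplus N) \;\cong\; R \oplus N \oplus \Hom_R(N,R) \oplus \Hom_R(N,N).
\]
Comparing minimal numbers of generators of the two sides, $\mu_R(R) = 1$ while the right-hand side has $\mu$ equal to $1 + \mu_R(N) + \mu_R(\Hom_R(N,R)) + \mu_R(\Hom_R(N,N))$. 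Hence $\mu_R(N) = 0$, i.e., $N = 0$, and so $\omega \cong R$, proving that $R$ is Gorenstein.

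The only step requiring genuine input is the reduction to $n = 0$ via Theorem~\ref{thm: syz mcm no free image}; the remainder is a short formal manipulation using the definition of a semidualizing module, so no serious obstacle is anticipated.
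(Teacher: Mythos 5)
Your proof is correct and follows the paper's argument in its essential step: both use Theorem~\ref{thm: syz mcm no free image} to force $n=0$, so that $\omega$ itself has a non-zero free direct summand. The only divergence is the endgame: the paper simply notes that $R$, being a direct summand of the finite-injective-dimension module $\omega$, has finite injective dimension and hence is Gorenstein, whereas you show the stronger fact that $\omega\cong R$ by expanding $R\cong\Hom_R(\omega,\omega)$ for $\omega\cong R\oplus N$ and comparing minimal numbers of generators; this is a valid (if slightly longer) alternative, since $\mu$ is additive on direct sums and $\mu(N)=0$ forces $N=0$ by Nakayama.
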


\begin{proof}
	If $ R $ is Gorenstein, then $ \Omega_0^R(\omega) = \omega \cong R $. For the other implication, suppose that $ \Omega_n^R(\omega) $ has a non-zero free direct summand for some $ n \ge 0 $. Then, by virtue of Theorem~\ref{thm: syz mcm no free image}, $ n $ must be equal to $ 0 $. Thus $ \omega $ has a non-zero free direct summand, and hence $ R $ has finite injective dimension, i.e., $ R $ is Gorenstein.
\end{proof}

\begin{remark}\label{rmk: charac Gor by syz of MCM mod of fin inj dim}
	It can be observed that in the proof of Corollary~\ref{cor: criteria Gor syz of can mod}, we only use that $ \omega $ is an MCM $ R $-module of finite injective dimension. Hence, in Corollary~\ref{cor: criteria Gor syz of can mod}, canonical module $ \omega $ of $ R $ can be replaced by an arbitrary MCM $ R $-module of finite injective dimension.
\end{remark}

\section{Criteria for regular local rings}\label{Sec: Characterization of RLRs}
 
 In this section, we obtain a few criteria for CM local rings of minimal multiplicity to be regular in terms of vanishing of certain Exts or Tors involving syzygy modules of the residue field.
 
 \begin{theorem}\label{thm: criteria for RLR}
 	Let $ (R,\mathfrak{m},k) $ be a $ d $-dimensional CM local ring of minimal multiplicity. Let $ M $ and $ N $ be MCM homomorphic images of finite direct sums of syzygy modules of $ k $. {\rm(}Possibly, $ M = N ${\rm)}. Then the following statements are equivalent:
 	\begin{enumerate}[{\rm(i)}]
 		\item
 		$ R $ is regular;
 		\item
 		$ \Ext_R^i(M,N) = 0 $ for some $ (d + 1) $ consecutive values of $ i \ge 1 $;
 		\item
 		$ \Tor_i^R(M,N) = 0 $ for some $ (d + 1) $ consecutive values of $ i \ge 1 $.
 	\end{enumerate}
 \end{theorem}
 
 \begin{proof}
 	(i) $ \Longrightarrow $ \{(ii) and (iii)\}: Let $ R $ be a regular local ring. Then, by virtue of the Auslander-Buchsbaum Formula, we obtain that every MCM $ R $-module is free. Therefore $ M $ is a free $ R $-module, and hence
 	\[
 	\Ext_R^i(M,N) = 0 = \Tor_i^R(M,N) \quad \mbox{for all }i \ge 1.
 	\]
 	
 	\{(ii) or (iii)\} $ \Longrightarrow $ (i): By the observations made in Section~\ref{para: res field infinite}, we may assume that the residue field $ k $ is infinite. We prove the implications (ii) $ \Rightarrow $ (i) and (iii) $ \Rightarrow $ (i) by using induction on $ d $. Let us first consider the base case $ d = 0 $. In this case, $ R $ has minimal multiplicity is equivalent to saying that $ \mathfrak{m}^2 = 0 $. Therefore, in view of Section~\ref{para:soc-and-ann-of-syz}, we have that $ \mathfrak{m} \subseteq \Soc(R) \subseteq \ann_R(\Omega_n^R(k)) $ for all $ n \ge 1 $. Thus $ \mathfrak{m} \, \Omega_n^R(k) = 0 $ for all $ n \ge 0 $. Since $ M $ and $ N $ are homomorphic images of finite direct sums of syzygy modules of $ k $, we obtain that $ \mathfrak{m} M = 0 $ and $ \mathfrak{m} N = 0 $. Therefore $ M $ and $ N $ are non-zero $ k $-vector spaces. So $ \Ext_R^i(M,N) = 0 $ for some $ i \ge 1 $ yields that $ \Ext_R^i(k,k) = 0 $ for some $ i \ge 1 $, which gives that $ \projdim_R(k) $ is finite, and hence $ R $ is regular. For another implication, $ \Tor_i^R(M,N) = 0 $ for some $ i \ge 1 $ yields that $ \Tor_i^R(k,k) = 0 $ for some $ i \ge 1 $, which also implies that $ \projdim_R(k) $ is finite, and hence $ R $ is regular.
 	
 	We now give the inductive step. We may assume that $ d \ge 1 $. Therefore, in view of Lemma~\ref{lem: mod x and min mult}, there exists an $ R $-regular element $ x \in \mathfrak{m} \smallsetminus \mathfrak{m}^2 $ such that $ R/(x) $ has minimal multiplicity. We set $ \overline{(-)} := (-) \otimes_R R/(x)$. So $ \overline{R} $ is a $ (d-1) $-dimensional CM local ring of minimal multiplicity. Since $ M $ and $ N $ are MCM $ R $-modules, and $ x $ is $ R $-regular, we get that $ x $ is regular on both $ M $ and $ N $. Hence $ \overline{M} $ and $ \overline{N} $ are MCM $ \overline{R} $-modules. Let
 	\[
 		f : \bigoplus_{n \in \Lambda'} \left( \Omega_n^R(k) \right)^{j'_n} \longrightarrow M \quad\mbox{ and }\quad g : \bigoplus_{n \in \Lambda''} \left( \Omega_n^R(k) \right)^{j''_n} \longrightarrow N
 	\]
 	be surjective $ R $-module homomorphisms, where $ \Lambda' $ and $ \Lambda'' $ are finite collections of non-negative integers, and $ \{ j'_n, j''_n \} $ are positive integers. Tensoring $ f $ and $ g $ with $ R/(x) $, we obtain that
 	\begin{equation}\label{thm: criteria for RLR: eqn 1}
 		\overline{f} : \bigoplus_{n \in \Lambda'} \left( \overline{\Omega_n^R(k)} \right)^{j'_n} \longrightarrow \overline{M} \quad\mbox{ and }\quad \overline{g} : \bigoplus_{n \in \Lambda''} \left( \overline{\Omega_n^R(k)} \right)^{j''_n} \longrightarrow \overline{N}
 	\end{equation}
 	are surjective $ \overline{R} $-module homomorphisms. By virtue of \cite[Corollary~5.3]{Tak06}, for all $ n \ge 0 $, we have
 	\begin{equation}\label{thm: criteria for RLR: eqn 2}
 		\overline{\Omega_n^R(k)} \cong \Omega_n^{\overline{R}}(k) \oplus \Omega_{n-1}^{\overline{R}}(k) \quad [\mbox{by setting } \Omega_{-1}^{\overline{R}}(k) = 0].
 	\end{equation}
 	So \eqref{thm: criteria for RLR: eqn 1} and \eqref{thm: criteria for RLR: eqn 2} together imply that $ \overline{M} $ and $ \overline{N} $ are MCM homomorphic images of finite direct sums of $ \Omega_n^{\overline{R}}(k) $, $ n \ge 0 $. In view of Lemma~\ref{lemma: Ext Tor mod x}, $ \Ext_R^i(M,N) = 0 $ (resp. $ \Tor_i^R(M,N) = 0 $) for some $ (d + 1) $ consecutive values of $ i \ge 1 $ yields that $ \Ext_{\overline{R}}^i(\overline{M},\overline{N}) = 0 $ (resp. $ \Tor_i^{\overline{R}}(\overline{M},\overline{N}) = 0 $) for some $ d ~ (= \dim(\overline{R}) + 1) $ consecutive values of $ i \ge 1 $. Therefore, by induction hypothesis, we obtain that $ \overline{R} $ is regular, and hence $ R $ is regular as $ x \in \mathfrak{m} \smallsetminus \mathfrak{m}^2 $ is an $ R $-regular element. This proves the implications (ii) $ \Rightarrow $ (i) and (iii) $ \Rightarrow $ (i), and hence the theorem.
 \end{proof}
 
 \begin{remark}\label{rmk: M N mcm as dir summand of syz}
 	If $ (R,\mathfrak{m},k) $ is a $ d $-dimensional CM local ring, then every non-zero syzygy module $ \Omega_n^R(k) $ $ (\mbox{where }n \ge d) $ is MCM; see, e.g., \cite[1.3.7]{BH98}. Hence any non-zero direct summand of $ \Omega_n^R(k) $ $ (n \ge d) $ is also MCM. Therefore, in Theorem~\ref{thm: criteria for RLR}, $ M $ and $ N $ can be taken as any non-zero direct summands of $ \Omega_n^R(k) $, $ n \ge d $.
 \end{remark}
 
 The following example shows that the number of consecutive vanishing of Exts or Tors in Theorem~\ref{thm: criteria for RLR} cannot be further reduced.
 
 \begin{example}\label{exam: cannot red d no of vanish of exts: RLR}
 	Let $ R = k[[X,Y]]/(XY) $, where $ k[[X,Y]] $ is a formal power series ring in two indeterminates $ X $ and $ Y $ over a field $ k $. We set $ \mathfrak{m} := (x,y) $, where $ x $ and $ y $ are the images of $ X $ and $ Y $ in $ R $ respectively. Clearly, $ (R, \mathfrak{m}, k) $ is a CM local ring of dimension $ 1 $. It can be easily seen that $ e(R) = 2 $ and $ \mu(\mathfrak{m}) = 2 $. Therefore $ R $ has minimal multiplicity. Note that we have the following direct sum decomposition:
 	\[
	 	\Omega_1^R(k) = \mathfrak{m} = (x,y) = (x) \oplus (y).
 	\]
 	We set $ M := (x) $ and $ N := (y) $. Since $ \Omega_1^R(k) $ is MCM (by Remark~\ref{rmk: M N mcm as dir summand of syz}), we obtain that $ M $ and $ N $ are MCM homomorphic images of $ \Omega_1^R(k) $. Considering the minimal free resolution of $ M $:
 	\[
	 	\cdots \stackrel{x\cdot}{\longrightarrow} R \stackrel{y\cdot}{\longrightarrow} R \stackrel{x\cdot}{\longrightarrow} R \stackrel{y\cdot}{\longrightarrow} R \longrightarrow 0,
 	\]
 	one can easily compute the following:
 	\begin{align*}
 	& \Tor_{2i+1}^R(M,M) = (x)/(x^2) \neq 0 \quad \mbox{for all } i \ge 0,\\
 	& \Tor_{2i}^R(M,M) = 0 \quad \mbox{for all } i \ge 1,\\
 	& \Ext_R^{2i}(M,M) = (x)/(x^2) \neq 0 \quad \mbox{for all } i \ge 1 \quad \mbox{and} \\
 	& \Ext_R^{2i+1}(M,M) = 0 \quad \mbox{for all } i \ge 0.
 	\end{align*}
 	According to Theorem~\ref{thm: criteria for RLR}, we need at least $ 2 $ consecutive vanishing of Exts or Tors to conclude that $ R $ is regular. In this case, $ R $ is not regular. Note that one can also compute $ \Tor_i^R(M,N) $ and $ \Ext_R^i(M,N) $ to conclude the fact.
 \end{example}
 
 \begin{example}
 	If $ M $ and $ N $ are not homomorphic images of finite direct sums of syzygy modules of $ k $, then Theorem~\ref{thm: criteria for RLR} does not hold true.
 	
 	(1) One can always take $ M = N = R $. In this case, we have that $ \Ext_R^i(M,N) = 0 = \Tor_i^R(M,N) $ for all $ i \ge 1 $. But there are CM local rings of minimal multiplicity which are not regular.
 	
 	(2) Suppose $ R $ is a CM local ring of minimal multiplicity, which is not Gorenstein. (For example, $ R $ can be taken as
 	\[
 	k[X_1,\ldots,X_n]/(X_1,\ldots,X_n)^2 \quad\mbox{for some }n \ge 2,
 	\]
 	where $ X_1,\ldots,X_n $ are indeterminates, and $ k $ is a field). Let $ \omega $ be a canonical module of $ R $. Clearly, $ \omega $ is a non-free $ R $-module. Setting $ M = N = \omega $, we have $ \Ext_R^i(M,N) = 0 $ for all $ i \ge 1 $, but $ R $ is not regular.
 \end{example}
 
 We now close this section by presenting a natural question.
 
 \begin{question}
 	Can we drop the minimal multiplicity hypothesis in Theorem~\ref{thm: criteria for RLR}?
 \end{question}
 
 Though we have not been able to get some counterexample, but we believe that if we omit this hypothesis, then Theorem~\ref{thm: criteria for RLR} does not hold true.
 
\section{Criteria for Gorenstein local rings}\label{Sec: Characterization of Gor rings}
 
 In this section, we provide several criteria for Gorenstein local rings via syzygy modules of the residue field over CM local rings of minimal multiplicity. We start with the following theorem, which is analogous to the results by Ulrich \cite[Theorem~3.1]{Ulr84}, Hanes-Huneke \cite[Theorems~2.5 and 3.4]{HH05} and Jorgensen-Leuschke \cite[Theorems~2.2 and 2.4]{JL07}.
 
 \begin{theorem}\label{thm: criteria for Gor by Ext}
 	Let $ (R,\mathfrak{m},k) $ be a $ d $-dimensional CM local ring of minimal multiplicity. Let $ L $ be an MCM homomorphic image of a finite direct sum of syzygy modules of $ k $. If $ \Ext_R^i(L,R) = 0 $ for some $ (d + 1) $ consecutive values of $ i \ge 1 $, then $ R $ is Gorenstein.
 \end{theorem}
 
 \begin{proof}
 	In view of Section~\ref{para: res field infinite}, we may assume that the residue field $ k $ is infinite. We use induction on $ d $. We first consider the base case $ d = 0 $. In this case, we have that $ L $ is a non-zero $ k $-vector space as in the proof of Theorem~\ref{thm: criteria for RLR}.	So $ \Ext_R^i(L,R) = 0 $ for some $ i \ge 1 $ yields that $ \Ext_R^i(k,R) = 0 $ for some $ i \ge 1 $, which implies that $ R $ is Gorenstein, see, e.g., \cite[Theorem~18.1]{Mat86}.
 	
 	We now give the inductive step. We may assume that $ d \ge 1 $. So, in view of Lemma~\ref{lem: mod x and min mult}, there exists an $ R $-regular element $ x \in \mathfrak{m} \smallsetminus \mathfrak{m}^2 $ such that $ R/(x) $ has minimal multiplicity. We set $ \overline{(-)} := (-) \otimes_R R/(x)$. So $ \overline{R} $ is a $ (d-1) $-dimensional CM local ring of minimal multiplicity. As in the proof of Theorem~\ref{thm: criteria for RLR}, we get that $ \overline{L} $ is an MCM homomorphic image of a finite direct sum of $ \Omega_n^{\overline{R}}(k) $, $ n \ge 0 $. Furthermore, in view of Lemma~\ref{lemma: Ext Tor mod x}, $ \Ext_R^i(L,R) = 0 $ for some $ (d + 1) $ consecutive values of $ i \ge 1 $ yields that $ \Ext_{\overline{R}}^i(\overline{L},\overline{R}) = 0 $ for some $ d ~ (= \dim(\overline{R}) + 1) $ consecutive values of $ i \ge 1 $. Therefore, by induction hypothesis, we obtain that $ \overline{R} $ is Gorenstein, and hence $ R $ is Gorenstein as $ x $ is $ R $-regular. This completes the proof of the theorem.
 \end{proof}
 
 Let us recall the notion of G-dimension, due to Auslander and Bridger \cite{AB69}.
 
 \begin{definition}\label{defn:G-dim}
 	An $ R $-module $ M $ is said to have $ \gdim_R(M) = 0 $ if the natural map $ M \to M^{**} $ is an isomorphism, and $ \Ext_R^i(M,R) = 0 = \Ext_R^i(M^*,R) $ for all $ i \ge 1 $, where $ (-)^* := \Hom_R(-,R) $.
 \end{definition} 
 
 As a consequence of Theorem~\ref{thm: criteria for Gor by Ext}, we obtain the following result.
 
 \begin{corollary}\label{cor: Tak ques}
 	Let $ (R,\mathfrak{m},k) $ be a CM local ring of minimal multiplicity. If a finite direct sum of syzygy modules of $ k $ maps onto a non-zero $ R $-module $ L $ such that $\gdim_R(L) = 0$, then $ R $ is Gorenstein.
 \end{corollary}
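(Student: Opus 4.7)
The proof should be a short reduction to Theorem~\ref{thm: criteria for Gor by Ext}. The plan is to verify that the module $L$ meets both hypotheses in that theorem's item~(ii): that $L$ is a non-zero MCM homomorphic image of a finite direct sum of syzygy modules of $k$, and that $\Ext_R^i(L,R) = 0$ for all $i \ge 1$. The second condition is immediate from the very definition of $\gdim_R(L) = 0$ (Definition~\ref{defn:G-dim}).

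First, I would establish that $L$ is MCM. By assumption $L$ is a non-zero homomorphic image of a finite direct sum of syzygy modules of $k$. The standard Auslander--Bridger formula gives
\[
\gdim_R(L) + \depth(L) = \depth(R),
\]
since $\gdim_R(L) < \infty$. Given $\gdim_R(L) = 0$ and the fact that $R$ is CM of dimension $d$, this yields $\depth(L) = d$, and since $\depth(L) \le \dim(L) \le \dim(R) = d$, we conclude $L$ is an MCM $R$-module. (Alternatively, one may cite that totally reflexive modules over a CM local ring are MCM.)

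Second, from $\gdim_R(L) = 0$ one has $\Ext_R^i(L,R) = 0$ for every $i \ge 1$ by definition. Thus condition~(ii) of Theorem~\ref{thm: criteria for Gor by Ext} is satisfied, and we conclude that $R$ is Gorenstein by applying the implication (ii)~$\Longrightarrow$~(iii) of that theorem.

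There is no real obstacle here; the only point requiring care is invoking the Auslander--Bridger equality to pass from $\gdim_R(L) = 0$ to the MCM property of $L$, after which the corollary reduces to a direct citation of Theorem~\ref{thm: criteria for Gor by Ext}.
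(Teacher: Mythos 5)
Your proposal is correct and follows exactly the paper's own argument: apply the Auslander--Bridger formula to deduce that $L$ is MCM, note that $\Ext_R^i(L,R)=0$ for all $i\ge 1$ by the definition of $\gdim_R(L)=0$, and invoke Theorem~\ref{thm: criteria for Gor by Ext}. No gaps.
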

 
 \begin{proof}
 	In view of the Auslander-Bridger Formula (\cite[Theorem\,(4.13)(b)]{AB69}), since $ R $ is CM and $\gdim_R(L) = 0$, we obtain that $ L $ is MCM. Since $ \gdim_R(L) = 0 $, we also have $ \Ext_R^i(L,R) = 0 $ for all $ i \ge 1 $. So the result follows from Theorem~\ref{thm: criteria for Gor by Ext}.
 \end{proof}
 
 \begin{remark}
 	In \cite[6.6]{Tak06}, after proving Theorem~6.5, Takahashi raised the following question which is still open: If $ \Omega_n^R(k) $ has a non-zero direct summand of G-dimension $ 0 $ for some $ n > \depth(R) + 2 $, then is $R$ Gorenstein? Corollary~\ref{cor: Tak ques} (in particular) provides an affirmative answer to this question for CM local rings of minimal multiplicity.
 \end{remark}
 
 We now give a few criteria for Gorenstein local rings in terms of vanishing of certain Exts or Tors involving canonical modules.
 
 \begin{theorem}\label{thm: criteria for Gor by can mod}
 	Let $ (R,\mathfrak{m},k) $ be a $ d $-dimensional CM local ring of minimal multiplicity. Suppose $ R $ has a canonical module $ \omega $. Let $ L $ be an MCM homomorphic image of a finite direct sum of syzygy modules of $ k $. Then the following statements are equivalent:
 	\begin{enumerate}[{\rm(i)}]
 		\item
 		$ R $ is Gorenstein;
 		\item
 		$ \Ext_R^i(\omega,L) = 0 $ for some $ (d + 1) $ consecutive values of $ i \ge 1 $;
 		\item
 		$ \Tor_i^R(\omega,L) = 0 $ for some $ (d + 1) $ consecutive values of $ i \ge 1 $.
 	\end{enumerate}
 \end{theorem}
 
 \begin{proof}
 	(i) $ \Longrightarrow $ \{(ii) and (iii)\}: If $ R $ is Gorenstein, then $ \omega \cong R $. Hence we have
 	\[
	 	\Ext_R^i(\omega,L) = 0 = \Tor_i^R(\omega,L) \quad \mbox{for all }i \ge 1.
 	\]
 	
 	\{(ii) or (iii)\} $ \Longrightarrow $ (i): Without loss of generality, we may assume that the residue field $ k $ is infinite. As before, to prove these implications ((ii) $ \Rightarrow $ (i) and (iii) $ \Rightarrow $ (i)), we use induction on $ d $. We first assume that $ d = 0 $. In this case, we have that $ L $ is a non-zero $ k $-vector space. So $ \Ext_R^i(\omega,L) = 0 $ for some $ i \ge 1 $ yields that $ \Ext_R^i(\omega,k) = 0 $ for some $ i \ge 1 $, which gives that $ \projdim_R(\omega) $ is finite, and hence
 	$ R $ is Gorenstein. For another implication, $ \Tor_i^R(\omega,L) = 0 $ for some $ i \ge 1 $ yields that $ \Tor_i^R(\omega,k) = 0 $ for some $ i \ge 1 $, which also implies that $ \projdim_R(\omega) $ is finite, and hence $ R $ is Gorenstein.
 	
 	For the inductive step, we assume that $ d \ge 1 $. In view of Lemma~\ref{lem: mod x and min mult}, there exists an $ R $-regular element $ x \in \mathfrak{m} \smallsetminus \mathfrak{m}^2 $ such that $ R/(x) $ has minimal multiplicity. We set $ \overline{(-)} := (-) \otimes_R R/(x)$. So $ \overline{R} $ is a $ (d-1) $-dimensional CM local ring of minimal multiplicity. As before, we get that $ \overline{L} $ is an MCM homomorphic image of a finite direct sum of $ \Omega_n^{\overline{R}}(k) $, $ n \ge 0 $. Since $ \omega $ is a canonical module of $ R $, it is well known that $ \overline{\omega} $ is a canonical module of $ \overline{R} $. By virtue of Lemma~\ref{lemma: Ext Tor mod x}, $ \Ext_R^i(\omega,L) = 0 $ (resp. $ \Tor_i^R(\omega,L) = 0 $) for some $ (d + 1) $ consecutive values of $ i \ge 1 $ yields that $ \Ext_{\overline{R}}^i(\overline{\omega},\overline{L}) = 0 $ (resp. $ \Tor_i^{\overline{R}}(\overline{\omega},\overline{L}) = 0 $) for some $ d ~ (= \dim(\overline{R}) + 1) $ consecutive values of $ i \ge 1 $. Therefore, by induction hypothesis, we obtain that $ \overline{R} $ is Gorenstein, and hence $ R $ is Gorenstein. This proves the implications, and hence the theorem.
 \end{proof}
 
 \begin{remark}\label{rmk: of thm: charac of Gor by can mod}
 	It can be noticed that in the proof of Theorem~\ref{thm: criteria for Gor by can mod}, it is only used that $ \omega $ is an MCM $ R $-module of finite injective dimension. Therefore, in Theorem~\ref{thm: criteria for Gor by can mod}, one can replace $ \omega $ (canonical module of $ R $) by an arbitrary MCM $ R $-module of finite injective dimension.
 \end{remark}
 
 We now analyze a few examples.
 
 \begin{example}
 	In Theorems~\ref{thm: criteria for Gor by Ext} and \ref{thm: criteria for Gor by can mod}, the number of consecutive vanishing of Exts or Tors cannot be further reduced. Let $ (R,\mathfrak{m},k) $ be an Artinian local ring such that $ \mathfrak{m}^2 = 0 $ and $ \mu(\mathfrak{m}) \ge 2 $. (For example, $ R $ can be taken as
 	\[
	 	k[X_1,\ldots,X_n]/(X_1,\ldots,X_n)^2 \quad\mbox{for some }n \ge 2,
	\]
	where $ X_1,\ldots,X_n $ are indeterminates). Clearly, the ring $ R $ has minimal multiplicity, and it is not Gorenstein. Let $ L $ be a non-zero homomorphic image of a finite direct sum of syzygy modules of $ k $. Since $ R $ is Artinian, $ L $ is an MCM $ R $-module. In this case, we need at least one $ (= \dim(R) + 1) $ vanishing of $ \Ext_R^i(L,R) $, $ \Ext_R^i(\omega,L) $ or $ \Tor_i^R(\omega,L) $ to conclude that $ R $ is Gorenstein. So the number of consecutive vanishing of Exts or Tors cannot be further reduced.
 \end{example}
 
 \begin{example}
 	If $ L $ is not a homomorphic image of a finite direct sum of syzygy modules of $ k $, then Theorems~\ref{thm: criteria for Gor by Ext} and \ref{thm: criteria for Gor by can mod} do not hold true. For example, taking $ L = R $ in Theorems~\ref{thm: criteria for Gor by Ext} and \ref{thm: criteria for Gor by can mod}, we have that $ \Ext_R^i(L,R) = 0 = \Tor_i^R(\omega,L) $ for all $ i \ge 1 $; while $ L = \omega $ in Theorem~\ref{thm: criteria for Gor by can mod} yields that $ \Ext_R^i(\omega,L) = 0 $ for all $ i \ge 1 $. But there are CM local rings of minimal multiplicity which are not Gorenstein.
 \end{example}
 
 As before, we may now ask the following natural question:
 
 \begin{question}
 	Can we omit the hypothesis that $ R $ has minimal multiplicity in Theorems~\ref{thm: criteria for Gor by Ext} and \ref{thm: criteria for Gor by can mod}?
 \end{question}

\section*{Acknowledgements}
The author would like to express his sincere gratitude to Hailong Dao who suggested the results of Jorgensen and Leuschke. The motivation for Theorem~\ref{thm: III} came from these results. It is a pleasure to thank Clare D'Cruz, Manoj Kummini and Tony J. Puthenpurakal for fruitful discussions and giving several helpful suggestions. The author would also like to thank Chennai Mathematical Institute for providing postdoctoral fellowship for this study. Finally, the author thanks the referee for a few comments and suggestions.

\end{document}